\newtheorem{defi}{Definition}
\crefname{defi}{Definition}{}
\newtheorem{lem}{Lemma}
\crefname{lem}{Lemma}{Lemmas}
\newtheorem{theo}{Theorem}
\crefname{theo}{Theorem}{Theorems}
\newtheorem{prop}{Proposition}
\crefname{prop}{Proposition}{Propositions}
\newtheorem{rem}{Remark}
\crefname{rem}{Remark}{}
\newtheorem{ex}{Example}
\crefname{ex}{Example}{Examples}
\newtheorem{ass}{Assumption}[section]
\crefname{ass}{Assumption}{Assumptions}
\DeclareMathOperator{\capa}{cap}
\DeclareMathOperator{\supp}{supp}
\DeclareMathOperator{\diam}{diam}
\newcommand{\D}{\mathcal{D}}
\newcommand{\R}{\mathbb{R}}
\newcommand{\Z}{\mathbb{Z}}
\newcommand{\cls}[1]{\overline{#1}}
\newcommand{\ai}{a_{i\ep}}
\newcommand{\Ri}{R_{i\ep}}
\newcommand{\ri}{r_{i\ep}}
\newcommand{\Ki}{K_i^\ep}
\newcommand{\Hi}{H_{i\ep}}
\newcommand{\Ai}{A_i^\ep}
\newcommand{\phii}{\phi_i^\ep}
\newcommand{\Bi}{B(x_i^\ep,\Ri)}
\newcommand{\ep}{\varepsilon}
\newcommand{\la}{\Delta}
\newcommand{\Lip}[1]{\abs{#1}_{C^{0,1}}}
\newcommand{\pd}{\partial}
\newcommand{\duS}[2]{\ev{#1,#2}_{H^{-1}(\Omega)}}
\newcommand{\loc}{{\rm loc}}
\begin{document}
\title[]{Inverse homogenization problem for the Drichlet problem for Poisson equation for $W^{-1,\infty}$ potential}
\author[H. Ishida]{Hiroto Ishida}
\address{Graduate School of Science, University of Hyogo\\Shosha, Himeji, Hyogo 671-2201, Japan}
\email{immmrfff@gmail.com}
\subjclass[2020]{35B27}
\keywords{Poisson problem, Homogenization.}
\date{\today}
\maketitle
\begin{abstract}
We consider Poisson problems $-\la u^\ep=f$ on perforated domains,
and characterize the limit of $u^\ep$ as the solution to $(-\la+\mu)u=f$ on domain $\Omega\subset\R^d$ with some potential $\mu\in W^{-1,\infty}(\Omega).$ It is known that $\mu$ is related to the capacity of holes when $\mu\in L^\infty(\Omega).$
In this paper, we characterize $\mu$ as the limit of the density of the capacity of holes also for many $\mu\in W^{-1,\infty}(\Omega).$ We apply the result for the inverse homogenization problem, i.e. we construct holes corresponding to the given potential $\mu\in L^d(\Omega)+L^\infty(\delta_S)$ where $\delta_S$ is a surface measure.
\end{abstract}
\section{Introduction}
Let $\Omega\subset\R^d~(d\geq3)$ be open and bounded with $C^2$ boundary, and $f\in L^2(\Omega).$ Let $\{\Ki\}_{i\in\Lambda^\ep}$ be at most countable family of compact sets of $\R^d$ for each $\ep>0.$ We consider Poisson problems with Dirichlet condition in $\Omega\setminus\bigcup_{i\in\Lambda^\ep}\Ki$, that is,
\begin{equation}\label{poi}\tag{PDE1}
u^\ep\in H_0^1\qty(\Omega\setminus\bigcup_{i\in L^\ep}\Ki),~-\la u^\ep=f.
\end{equation}
We regard $H_0^1(\Omega\setminus\bigcup_{i\in\Lambda^\ep}\Ki)\subset H_0^1(\Omega)$ by the zero extension. The aim of this paper is to see $u^\ep\to u$ weakly in $H_0^1(\Omega)$ ("as $\ep\to 0$" is omitted in this paper) under the assumptions in \Cref{ass}, where $u$ is the solution to
\begin{equation}\label{poipot}\tag{PDE2}
u\in H_0^1(\Omega),(-\la+\mu)u=f
\end{equation}
with some $\mu\in W^{-1,\infty}(\Omega)$ which is considered as the limit of the density of the capacity of holes (see \eqref{ldc}). The assumptions in \Cref{ass} mean holes are separated each other in a proper sense.

We also consider the inverse homogenization problem. That is a problem to construct holes $\{\Ki\}$ such that $u^\ep\to u$ for a given potential $\mu.$
\subsection{Known results}
The PDE of the form \eqref{poipot} is often used to characterize the limit $u$.
Examples for which $\mu=0$ are introduced at \cite{RT} with domains perforated by compact sets which have enough small capacities.
On the other hand, there are examples for which $\mu\neq0.$
Cases that holes are closed (compact or unbounded) sets are introduced at \cite{CM}. For example, \cite[Example 2.1.]{CM} is the case perforated by periodic balls distributed on $\R^d.$ It is generalized to the case that holes are concentrated balls in \cite{Con}. 

Relation between the capacity (resp. the relative capacity) of holes and $\mu$ is suggested in \cite[Theorem 2.3.]{KM} (resp. \cite{Hru2}) in the case that holes are compact sets and $\mu\in L^\infty.$ On the other hand, there is an example such that $\mu\notin L^\infty.$ For example, $\mu$ is a surface measure on a hyper plane in \cite[Example 2.9.]{CM}.

Our result characterizes $\mu$ as the limit of the density of the capacity of holes (see \eqref{ldc}) even for many $\mu\in W^{-1,\infty}.$ The result can be also applied to the inverse homogenization problem, that is, a problem to construct holes corresponding to a given potential such as $\mu\in L^d+L^\infty(\delta_S)$ where $\delta_S$ is a surface measure.

Convergence rate for Dirichlet Laplacians for $\mu\in \bigcup_{p>d}L^p(\Omega)$ is studied by the author of this paper in \cite{RATE}. However, the result requires additional assumptions. In particular, we still need the main result of this paper for $\mu\in W^{-1,\infty}$ such as a surface measure.

The capacities of several sets are known such as ball in $\R^d$, ellipsoid, torus, semi-ball, union of two balls in $\R^3$ (see also \cite[Chap. II, Sec. 3. No. 14]{NSL}). It is one of the advantage to use the capacities of holes to characterize $\mu.$

This paper is organized as follows. We introduce notation in \Cref{nota}. We state the main result in \Cref{sresu} under the assumptions in \Cref{ass}. We show the main result in \Cref{spr}.
Finally, we show the result for the inverse homogenization problem in \Cref{sinv}.
\section{Notation}\label{nota}
We use the following notation:

$S_d$ is the surface area of the unit sphere in $\R^d.$

$\Lip{g}$ is the minimal Lipschitz constant of function $g.$

$\diam E$ is diameter of $E\subset\R^d.$

$1_E(x)=\begin{cases}
1&(x\in E)\\
0&(x\notin E)\end{cases}$ for $E\subset\R^d,~x\in\R^d.$

$B(x,r)=\{y\in\R^d\mid|x-y|<r\},~B(x,0)=\emptyset$ for $x\in\R^d,~r>0.$

$|E|$ is Lebesgue measure of Lebesgue measurable set $E\subset\R^d.$

$\delta_S$ is the surface measure of a surface $S.$

Moreover, we also use non standard notation:

We denote $\sum_{\substack{i\in\Lambda^\ep\\\Omega\cap \Ki\neq\emptyset}}$ by $\sum_i.$ Similar for $\bigcup_i$ and $\sup_i.$
\section{Assumptions and the main result}\label{ass}
In this section, we state assumptions and the main theorem.
For convenience, "for each $0<\ep\ll1,$ and $i\in\Lambda^\ep$" is denoted by "for each $\ep,~i$".
\begin{ass}\label{assholec1}
$\pd\Ki$ is piecewise $C^1$ for each $\ep,~i.$
\end{ass}
We recall representation of the Newtonian capacity for a compact set with piecewise $C^1$ boundary (see also \cite[(2.25),(2.26)]{KM}).
\begin{defi}[capacity]
We let $\Hi$ the function such that
\begin{equation}\label{cappot}
\left\{
\begin{array}{r@{~}ll}
\Hi&=1&\mbox{on }\Ki,\\
\la\Hi&=0&\mbox{on }\R^d\setminus \Ki,\\
\Hi(x)&\to0&(|x|\to\infty)
\end{array}\right.
\end{equation}
for each $\ep,~i.$ The capacity of each $K_i^\ep$ is defined by 
\[\capa(K_i^\ep)=\norm{\grad \Hi}_{L^2(\R^d)}^2.\]
\end{defi}
We denote $\cls{B(x_i^\ep,\ai)}$ the minimal ball such that $K_i^\ep\subset\cls{B(x_i^\ep,\ai)}$ for each $\ep,~i.$
Before stating our assumptions, we remark the embedding
\begin{equation}\label{embedding}
L^\infty(V)\hookrightarrow L^d(V)\hookrightarrow W^{-1,\infty}(V)\hookrightarrow H^{-1}(V)\mbox{ for every bounded open }V
\end{equation}
follows from embedding theorem $H_0^1(V)\hookrightarrow W_0^{1,1}(V)\hookrightarrow L^{(1-1/d)^{-1}}(V)\hookrightarrow L^1(V).$
\begin{defi}
For an open set $V\subset\R^d,$ we let
\[
H^{-1}(V)^+=\{\mu\geq0\colon\mbox{Borel measure on }V\mid\mu\in H^{-1}(V)\}.
\]
We also define
\[H^{-1}_\loc(\R^d)^+=\{\mu\geq0\colon\mbox{Borel measure on }\R^d\mid\mu\in H^{-1}(V)\mbox{ for every ball }V\subset\R^d\}.\]
We define $W^{-1,\infty}(V)^+$ and $W^{-1,\infty}_\loc(\R^d)^+$ similarly.

For a sequence $\{\nu_\ep,\nu\}\subset H^{-1}_\loc(\R^d)^+,$ we say
$\nu_\ep\to\nu$ in $H^{-1}_\loc(\R^d)$ if $\norm{\nu_\ep-\nu}_{H^{-1}(V)}$
$\to0$ for every ball $V\subset\R^d.$
\end{defi}
\begin{ass}\label{asss}
There exists $\Ri$ for each $\ep,~i$ such that $\{\Bi\}_{i\in\Lambda^\ep}$ is a family of disjoint sets and there exists Lebesgue measurable \begin{equation}\label{supertile}\Ai\supset\Bi\end{equation}
for each $\ep,~i$ such that
\begin{gather}
\label{A0}\tag{A0}
\sup_{\ep<1}\sup_{i}\Ri<\infty,\\
\label{A1}\tag{A1}
\sup_{\ep<1}\sup_i\ai/\Ri<1,\\
\label{A2}\tag{A2}
\sum_i\ai^{2(d-2)}\Ri^{-d+2}\to 0,\\
\label{A3}\tag{A3}
\sup_{\ep<1}\sup_i|\Ai|\Ri^{-d}<\infty,\\
\label{A4}\tag{A4}
\sum_i\ai^{d-2}\diam A_i^\ep\to 0,\\
\label{A5}\tag{A5}
\qty{\sum_i\frac{\ai^{d-2}}{|\Ai|}1_{\Ai}}\mbox{ converges in the norm of }H^{-1}(\Omega).\\
\label{A6}\tag{A6}
\sup_{\ep<1}\sum_i\ai^{d-2}<\infty.\\
\label{ldc}\tag{ldc}
\sum_i\frac{\capa(\Ki)}{|\Ai|}1_{\Ai}\to\mu\mbox{ in }\D'(\Omega).
\end{gather}
\end{ass}
\begin{rem}
We have \begin{equation}\label{Jung}
\ai\leq\diam\Ki\leq2\ai
\end{equation} by Jung's theorem. Therefore, we can replace $\ai$ in \Cref{asss} with $\diam\Ki.$
\end{rem}
We have a simple alternative for \Cref{asss} as below.
\begin{prop}\label{assalt}
\Cref{asss} replaced \eqref{A1} to \eqref{A6} with
\begin{gather}\tag{a1}\label{assalt1}
\sup_i\ai/\Ri\to0,\\
\tag{a2}\label{assalt2}
\sup_{\ep<1}\sup_i(\diam\Ai)/\Ri<\infty,\\
\tag{a3}\label{assalt3}
\qty{\sum_i\frac{\ai^{d-2}}{|\Ai|}1_{\Ai}}\mbox{ converges in }H^{-1}_\loc(\R^d)
\end{gather}
imply \Cref{asss}.
\begin{proof}
\eqref{assalt1} implies \eqref{A1}. \eqref{assalt3} implies \eqref{A5}.

Let $c=\sup_i\diam\Ai/\Ri,~c'=c\sup_i\Ri$ and $V=\bigcup_{x\in\Omega}B(x,c').$ Take $g\in\D(\R^d)$ such that $g\geq 1_V.$
\eqref{A6} follows from \eqref{assalt3} and
\[\sum_i\ai^{d-2}
\leq\ev{\sum_i\frac{\ai^{d-2}}{|\Ai|}1_{\Ai},g}.\]
\eqref{A2} follows from \eqref{assalt1}, \eqref{A6} and
\[\sum_i\ai^{2(d-2)}\Ri^{-d+2}
\leq(\sup_i\ai/\Ri)^{d-2}\sum_i\ai^{d-2}.\]
\eqref{A3} follows from
$|\Ai|<|B(0,\diam\Ai)|$
$\leq |B(0,c\Ri)|\leq|B(0,c)|\Ri^d.$ \eqref{A4} follows from \eqref{A2}, $|B(0,1)|\sum_i\Ri^d=\sum_i|B(x_i^\ep,\Ri)|\leq|V|$ and
\[\sum_i\ai^{d-2}\diam\Ai
\leq c\sum_i\ai^{d-2}\Ri^{(-d+2)/2}\Ri^{d/2}
\leq c\sqrt{\sum_i\ai^{2(d-2)}\Ri^{-d+2}}\sqrt{\sum_i\Ri^d}.\]
\end{proof}
\end{prop}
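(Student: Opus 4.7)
My plan is to prove each of the six conditions (A1)--(A6) in turn, handling the trivial ones first and leaving the interlocking estimates for the end. I would begin by observing that (A1) is immediate from \eqref{assalt1} (once $\ep$ is small enough), and that (A5) is just a restriction of \eqref{assalt3} to a ball containing $\Omega$, since $\Omega$ is bounded. Similarly (A3) is essentially just \eqref{assalt2}: since $A_i^\ep$ is contained in a ball of radius $\diam A_i^\ep\leq c\Ri$ with $c=\sup_i\diam\Ai/\Ri<\infty$, we get $|\Ai|\leq|B(0,c)|\Ri^d$.

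The key step, and the place where \eqref{assalt3} is really used, is (A6). The plan is to set $c'=c\sup_{\ep<1}\sup_i\Ri$ (finite by \eqref{A0} and \eqref{assalt2}), take $V=\bigcup_{x\in\Omega}B(x,c')$, which is a bounded open set containing every $\Ai$ that meets $\Omega$, and pick a test function $g\in\D(\R^d)$ with $g\geq 1_V$. By \eqref{assalt3} the sequence $\sum_i\ai^{d-2}|\Ai|^{-1}1_{\Ai}$ is bounded in the $H^{-1}$-norm of any ball containing $\supp g$, so pairing with $g$ stays bounded. Since $\int_{\Ai} g\,dx\geq|\Ai|$, this pairing dominates $\sum_i\ai^{d-2}$, giving (A6).

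Once (A6) is in hand, (A2) and (A4) fall out by elementary manipulations. For (A2) I would factor
\[\sum_i\ai^{2(d-2)}\Ri^{-d+2}\leq\qty(\sup_i\ai/\Ri)^{d-2}\sum_i\ai^{d-2},\]
which tends to $0$ by \eqref{assalt1} and (A6). For (A4), noting that the balls $\Bi$ are disjoint gives $\sum_i\Ri^d\leq|V|/|B(0,1)|$, so Cauchy--Schwarz yields
\[\sum_i\ai^{d-2}\diam\Ai\leq c\sum_i\ai^{d-2}\Ri^{(-d+2)/2}\Ri^{d/2}\leq c\sqrt{\sum_i\ai^{2(d-2)}\Ri^{-d+2}}\sqrt{\sum_i\Ri^d}\to 0\]
by (A2).

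The only genuine obstacle is (A6); everything else is either an immediate implication or a two-line manipulation. The subtlety in (A6) is choosing the enlarged region $V$ large enough to contain every $\Ai$ intersecting $\Omega$ (hence the use of \eqref{A0} and \eqref{assalt2} to bound $\diam\Ai$ uniformly), so that a single compactly supported test function simultaneously captures all the masses.
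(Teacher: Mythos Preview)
Your proposal is correct and follows essentially the same route as the paper's proof: the same choice of $c$, $c'$, $V$, and test function $g$ for (A6), the same factorization for (A2), the same diameter-to-volume bound for (A3), and the same Cauchy--Schwarz argument with the disjointness of the balls $B(x_i^\ep,\Ri)$ for (A4). The only cosmetic difference is the order in which you treat (A3) and (A6), which is immaterial since the two are independent.
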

\subsection{Result}\label{sresu}
The main result is stated as below. 
\begin{theo}\label{resu}
Under \Cref{assholec1,asss}, we have $u^\ep\to u$ weakly in $H_0^1(\Omega),$ where $u^\ep$ and $u$ are the solutions to \eqref{poi} and \eqref{poipot} respectively.
\end{theo}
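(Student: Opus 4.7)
\emph{Plan.} I follow the classical Cioranescu--Murat test-function strategy, with the corrector built directly from the capacity potentials $\Hi$. A priori, testing \eqref{poi} with $u^\ep$ and invoking Poincar\'e gives $\|u^\ep\|_{H_0^1(\Omega)}\le C\|f\|_{L^2(\Omega)}$, so along a subsequence $u^\ep\rightharpoonup\tilde u$ weakly in $H_0^1(\Omega)$ and strongly in $L^2(\Omega)$. Since $\mu\ge 0$ makes \eqref{poipot} uniquely solvable, it suffices to identify $\tilde u=u$; the full sequence then converges.

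\emph{Corrector.} For each $\ep,~i$, cut $\Hi$ off smoothly inside $\overline{\Bi}$ to obtain a function $\phii$ equal to $1$ on $\Ki$ and supported in $\overline{\Bi}$; by \eqref{supertile} and the disjointness of the balls $\Bi$, the function $w^\ep:=1-\sum_i\phii$ is well defined, vanishes on each $\Ki$, and is bounded in $H^1(\Omega)\cap L^\infty(\Omega)$. The far-field decay $\Hi(x)\lesssim(\ai/|x-x_i^\ep|)^{d-2}$ combined with \eqref{A1}, \eqref{A2} makes the annular truncation error $\phii-\Hi$ summable to zero in $H^1$.

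\emph{Properties of the corrector.} I next show that $w^\ep\to 1$ strongly in $L^2(\Omega)$ --- direct from $0\le 1-w^\ep\le\sum_i 1_{\Bi}$ together with \eqref{A2}--\eqref{A4} --- and that $-\la w^\ep\to-\mu$ in $H^{-1}(\Omega)$. The second convergence is the heart of the proof: each $-\la\phii$ is, up to an annular remainder controlled by \eqref{A2}, the capacitary distribution on $\pd\Ki$, a positive measure of total mass $\capa(\Ki)$ supported in $\Ai$; replacing it by its spatial mean $\tfrac{\capa(\Ki)}{|\Ai|}1_{\Ai}$ costs at most $C\,\capa(\Ki)\diam\Ai$ per hole in $H^{-1}$ by a Poincar\'e-type estimate, and summing over $i$ using \eqref{A4} yields
\[
\Bigl\|-\la w^\ep-\sum_i\tfrac{\capa(\Ki)}{|\Ai|}1_{\Ai}\Bigr\|_{H^{-1}(\Omega)}\to 0.
\]
Coupling this with the $H^{-1}$-boundedness furnished by \eqref{A5} and the $\D'$-limit \eqref{ldc} then upgrades the latter to convergence in $H^{-1}(\Omega)$, giving $-\la w^\ep\to-\mu$.

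\emph{Limit passage and main obstacle.} For $\psi\in C_c^\infty(\Omega)$, the function $\psi w^\ep\in H_0^1(\Omega\setminus\bigcup_i\Ki)$ is admissible in \eqref{poi}; expanding the product rule and rewriting the middle cross term through the $H^{-1}$--$H_0^1$ pairing yields
\[
\int w^\ep\,\nabla u^\ep\cdot\nabla\psi
+\langle-\la w^\ep,\,u^\ep\psi\rangle_{H^{-1},H_0^1}
-\int u^\ep\,\nabla w^\ep\cdot\nabla\psi
=\int f\psi w^\ep.
\]
The first term converges to $\int\nabla\tilde u\cdot\nabla\psi$ by weak-strong convergence; the second to $-\langle\mu\tilde u,\psi\rangle$, because $-\la w^\ep\to-\mu$ strongly in $H^{-1}(\Omega)$ while $u^\ep\psi\rightharpoonup\tilde u\psi$ in $H_0^1(\Omega)$; the third vanishes after replacing $u^\ep$ on each $\Bi$ by its mean, applying Poincar\'e at scale $\Ri$, and summing via \eqref{A1} and \eqref{A6}. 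This identifies $\tilde u$ as the solution of \eqref{poipot}. The \emph{main obstacle} is the $H^{-1}$-convergence of $-\la w^\ep$: assumption \eqref{ldc} only provides a distributional limit, so the per-hole averaging estimate --- tightly tied to the size controls \eqref{A3}--\eqref{A4} on $\Ai$ --- together with the $H^{-1}$-compactness from \eqref{A5} must be combined to promote it to the stronger topology needed to pass to the limit in the duality pairing. The vanishing of the cross term in the last display is the secondary technical point, and the hypothesis \eqref{A1} is calibrated precisely to make it work.
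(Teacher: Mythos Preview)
Your plan follows the Cioranescu--Murat template and matches the paper's strategy up to the construction of $w^\ep$ and the verification that $w^\ep\rightharpoonup 1$ in $H^1(\Omega)$. The genuine gap is the claim that $-\la w^\ep\to-\mu$ \emph{strongly in} $H^{-1}(\Omega)$. This is false: since $w^\ep\rightharpoonup 1$ in $H^1(\Omega)$ one has $\nabla w^\ep\rightharpoonup 0$ in $L^2(\Omega)$, hence $\langle-\la w^\ep,h\rangle=\int\nabla w^\ep\cdot\nabla h\to 0$ for every $h\in H_0^1(\Omega)$, i.e.\ $-\la w^\ep\rightharpoonup 0$ in $H^{-1}$. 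The error is in treating the ``annular remainder'' of $-\la(\phii\Hi)$ as small. Because $\phii\Hi$ has compact support, $\int_{\R^d}\la(\phii\Hi)=0$, so the annular part carries total mass $-\capa(\Ki)$ and is of the \emph{same} order as the capacitary measure on $\pd\Ki$; in the periodic model case $\Ri\sim\ep$, $\ai\sim\ep^{d/(d-2)}$ one checks that the annular contributions do not vanish in $H^{-1}$. Consequently your limit identity would yield $(-\la-\mu)\tilde u=f$, with the wrong sign, and in fact the second term does not converge to $\pm\langle\mu,\tilde u\psi\rangle$ by the mechanism you describe.

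What actually happens---and what the paper does---is that one only needs $\langle-\la w^\ep,\,\psi v^\ep\rangle\to\langle\mu,\psi v\rangle$ for sequences $v^\ep$ that \emph{vanish on the holes} and converge weakly (this is hypothesis \eqref{H5} of the cited Cioranescu--Murat theorem). In that pairing one integrates $(\nabla u^\ep,\nabla\Hi)_{L^2(\Bi)}$ by parts on $\Bi\setminus\Ki$; the boundary term on $\pd\Ki$ drops because $u^\ep=0$ there, leaving only $\int_{\pd\Bi}u^\ep\,\pd_n\Hi$. The paper then proves the \emph{strong} $H^{-1}$ convergence $-\sum_i(\grad\Hi\cdot n^\ep)\,\delta_{\pd\Bi}\to\mu$ (Lemmas \ref{convdistr}--\ref{convnorm}), which together with $u^\ep\rightharpoonup u$ in $H_0^1$ gives the limit. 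The upgrade from $\D'$ to $H^{-1}$ is obtained not by a per-hole Poincar\'e averaging but by constructing an auxiliary potential $q^\ep$ so that $\sum_i\ai^{d-2}\Ri^{-d+1}\delta_{\pd\Bi}$ differs from $d\sum_i\ai^{d-2}\Ri^{-d}1_{\Bi}$ by a term whose $H^{-1}$ norm is controlled by \eqref{A2}, and then invoking \eqref{A5} and Lemma~\ref{conv2}. In short, the ``strange term'' $\mu$ arises from the interaction of the corrector with the constraint $u^\ep=0$ on $\bigcup_i\Ki$, not from a limit of $-\la w^\ep$ alone; your argument needs to be reorganized to exploit that constraint in the integration by parts.
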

We have another characterization for $\mu$ as below.
\eqref{ldc2} is an expression which only relies on holes $\{\Ki\}.$
\begin{prop}\label{ldcconvtop}
Under \Cref{assholec1,asss}, we have
\begin{equation}\tag{ldc'}\label{ldc3}
\norm{\sum_i\frac{\capa(\Ai)}{|\Ai|}1_{\Ai}-\mu}_{H^{-1}(\Omega)}\to0.
\end{equation}
If $|\Ki|\neq\emptyset$ for $\ep,~i,$ we have
\begin{equation}\tag{ldc''}\label{ldc2}
\sum_i\frac{\capa(\Ki)}{|\Ki|}1_{\Ki}\to\mu\mbox{ in }\D'(\Omega).
\end{equation}
\end{prop}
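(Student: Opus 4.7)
The plan is to extract both statements from the distributional convergence \eqref{ldc} by combining it with the quantitative hypotheses of \Cref{asss}. For \eqref{ldc3} there are two independent strengthenings: (i) upgrading the topology from $\D'(\Omega)$ to the $H^{-1}(\Omega)$ norm, and (ii) replacing the hole capacity $\capa(\Ki)$ by the tile capacity $\capa(\Ai)$, where the latter is to be read as the standard extension of the Newtonian capacity from compact sets to Lebesgue measurable ones. For \eqref{ldc2} I have to replace both the support and the normalizing volume by those of the hole itself under the nondegeneracy $|\Ki|>0$, which I read as correcting the ``$|\Ki|\neq\emptyset$'' of the statement.

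For the numerator change (ii), my plan is to bound the residual $\sum_i\{\capa(\Ai)-\capa(\Ki)\}|\Ai|^{-1}1_{\Ai}$ in $H^{-1}(\Omega)$. Monotonicity of capacity with $\Ki\subset\cls{B(x_i^\ep,\ai)}\subset\Bi\subset\Ai$ gives nonnegativity, while the isocapacitary inequality combined with \eqref{A3} yields $\capa(\Ai)\leq C\diam(\Ai)^{d-2}\leq C'\Ri^{d-2}$. Testing against $\phi\in H_0^1(\Omega)$ and using $|\Ai|\geq|\Bi|\sim\Ri^d$ together with H\"older and Sobolev should then reduce the $H^{-1}$ norm of this residual to combinations of sums like $\sum_i\Ri^{d-2}\diam(\Ai)^{\alpha}$, which vanish under \eqref{A2}, \eqref{A4}, or the alternative \eqref{assalt1}.

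For the topological upgrade (i), my plan is to piggyback on \eqref{A5}. From $\capa(\Ki)\leq(d-2)S_d\,\ai^{d-2}$ and \eqref{A5} the nonnegative sequence $\sum_i\capa(\Ki)|\Ai|^{-1}1_{\Ai}$ is bounded in $H^{-1}(\Omega)$, so the $\D'$ convergence in \eqref{ldc} improves to weak $H^{-1}$ convergence. To promote this to norm convergence I would compare with the $H^{-1}$-norm convergent sequence of \eqref{A5}: the difference $\sum_i\{(d-2)S_d\ai^{d-2}-\capa(\Ki)\}|\Ai|^{-1}1_{\Ai}$ is nonnegative, bounded in $H^{-1}$, and $\D'$-convergent to $(d-2)S_d\nu-\mu$, where $\nu$ is the $H^{-1}$-limit produced by \eqref{A5}; using a quadratic-form/energy identity $\|\cdot\|_{H^{-1}}^2=\langle(-\Delta)^{-1}\cdot,\cdot\rangle$ and the structure of the proof of \Cref{resu}, the energies should converge, which in the Hilbert space $H^{-1}(\Omega)$ converts weak to strong convergence. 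Combining (i) and (ii) yields \eqref{ldc3}.

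For \eqref{ldc2}, assuming $|\Ki|>0$, I would test against $\phi\in\D(\Omega)$. Choosing any $y_i^\ep\in\Ki$,
\[
  \int_{\Ki}\phi=|\Ki|\phi(y_i^\ep)+O(\Lip{\phi}|\Ki|\diam\Ki),\qquad\int_{\Ai}\phi=|\Ai|\phi(y_i^\ep)+O(\Lip{\phi}|\Ai|\diam\Ai),
\]
so the difference between the pairings of $\phi$ with the sums in \eqref{ldc} and \eqref{ldc2} is dominated by $2\Lip{\phi}\sum_i\capa(\Ki)\diam\Ai\leq 2(d-2)S_d\Lip{\phi}\sum_i\ai^{d-2}\diam\Ai$, which vanishes by \eqref{A4}. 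The principal obstacle is the weak-to-strong upgrade in step (i): $\D'$ convergence together with a uniform $H^{-1}$ bound is not automatically $H^{-1}$-norm convergence, and the comparison with \eqref{A5} has to be executed carefully so that no cancellation between the positive ``tile'' piece and the ``reference'' piece $\ai^{d-2}/|\Ai|$ is lost.
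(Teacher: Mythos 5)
The statement \eqref{ldc3} as printed contains a typo: the numerator must be $\capa(\Ki)$, not $\capa(\Ai)$. This is evident from the paper's own proof, which dominates $\sum_i\capa(\Ki)|\Ai|^{-1}1_{\Ai}$ and feeds it into \Cref{conv2}, and it is forced because the literal statement is false: from $\Bi\subset\Ai$ one gets $\capa(\Ai)\geq\capa(\cls{\Bi})=(d-2)S_d\Ri^{d-2}$, while $|\Ai|\lesssim\Ri^d$ by \eqref{A3}, so $\sum_i\capa(\Ai)|\Ai|^{-1}1_{\Ai}\gtrsim\sum_i\Ri^{-2}1_{\Ai}$, which blows up whenever $\sup_i\Ri\to0$ (for instance in the periodic case $\Ri\sim\ep$). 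Your step (ii) therefore aims at an impossible target. The residual $\sum_i\{\capa(\Ai)-\capa(\Ki)\}|\Ai|^{-1}1_{\Ai}$ does not vanish, and the sums $\sum_i\Ri^{d-2}\diam(\Ai)^{\alpha}$ you hope to control are not governed by \eqref{A2}, \eqref{A4}, or \eqref{assalt1}; each of those supplies smallness only through positive powers of $\ai\ll\Ri$, which is exactly what disappears when $\capa(\Ki)$ is replaced by $\capa(\Ai)$.

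Once step (ii) is dropped, your step (i) is the correct goal, but your sketch is not a proof. As you acknowledge, $\D'$ convergence together with a uniform $H^{-1}(\Omega)$ bound only upgrades to weak $H^{-1}(\Omega)$ convergence, and your energy argument does not close, since the cross-term $\ev{\nu_\ep,(-\la)^{-1}\nu_\ep}$ does not pass to the limit without additional structure. The needed weak-to-norm upgrade is precisely \Cref{conv2} (the paper's reformulation of \cite[Lemma 2.8]{CM}): apply it with $\nu_+^\ep=\sum_i\capa(\Ki)|\Ai|^{-1}1_{\Ai}$, $\nu_-^\ep=0$, and $N^\ep=(d-2)S_d\sum_i\ai^{d-2}|\Ai|^{-1}1_{\Ai}$, which converges in $H^{-1}(\Omega)$-norm by \eqref{A5}; the domination is $\capa(\Ki)\leq(d-2)S_d\ai^{d-2}$ and the $\D'$ convergence is \eqref{ldc}. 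Your proof of \eqref{ldc2} is correct and essentially identical to the paper's: test against $g\in\D(\Omega)$, use $\capa(\Ki)\leq(d-2)S_d\ai^{d-2}$ and the Lipschitz oscillation bound to reduce the transfer from $\Ai$ to $\Ki$ to $(d-2)S_d\Lip{g}\sum_i\ai^{d-2}\diam\Ai\to0$ by \eqref{A4}, with the remaining term controlled by \eqref{ldc}.
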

The answer for inverse for homogenization problem is given as below.
\begin{theo}\label{invperi}
Let $S$ be a piece wise $C^1$ surface on $\R^d$ and $\mu\in L^d(\R^d,[0,\infty))+L^\infty(\delta_S,[0,\infty))$ (remark $\mu$ is a Borel measure on $\R^d$).
Let $\Lambda=\Lambda^\ep=2\Z^d,~\Ki=\cls{B(2\ep i,\qty(\frac{\mu(A_i^\ep)}{(d-2)S_d})^\frac{1}{d-2})}$ for each $\ep>0$ and $i\in\Lambda$ (see \Cref{figperi}). Then, the solutions $u^\ep$ and $u$ to \eqref{poipot} and \eqref{poi} satisfy $u^\ep\to u$ weakly in $H_0^1(\Omega).$
\end{theo}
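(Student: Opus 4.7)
The plan is to verify the hypotheses of \Cref{resu} via the simpler criterion of \Cref{assalt}, with the natural choices $x_i^\ep=2\ep i$, $\Ri=\ep$, and $\Ai=2\ep i+[-\ep,\ep]^d$. With these choices the balls $\Bi$ are pairwise disjoint and $\Bi\subset\Ai$, and since $\Ki$ is a closed ball \Cref{assholec1} is automatic. Moreover $\ai$ equals the prescribed radius $\qty(\frac{\mu(\Ai)}{(d-2)S_d})^{1/(d-2)}$, so the standard formula $\capa(\cls{B(x,r)})=(d-2)S_d\,r^{d-2}$ for balls in $\R^d$ yields the central identity $\capa(\Ki)=\mu(\Ai)$. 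Consequently,
\[
\sum_i\frac{\capa(\Ki)}{|\Ai|}1_{\Ai}=\sum_i\frac{\mu(\Ai)}{|\Ai|}1_{\Ai}=:\nu_\ep,\qquad \sum_i\frac{\ai^{d-2}}{|\Ai|}1_{\Ai}=\frac{1}{(d-2)S_d}\,\nu_\ep.
\]

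Hypothesis \eqref{assalt2} is immediate because $\diam\Ai/\Ri=2\sqrt d$. For \eqref{assalt1}, decompose $\mu=g_1+g_2\delta_S$ with $g_1\in L^d(\R^d)$ and $g_2\in L^\infty(\delta_S)$. By H\"older's inequality together with $|\Ai|\leq(2\ep)^d$ and the bound $\delta_S(\Ai)\leq C\ep^{d-1}$ (which follows from the piecewise $C^1$ regularity of $S$), one obtains $\mu(\Ai)\leq C\ep^{d-1}$, so $\ai/\Ri\leq C\ep^{1/(d-2)}\to 0$ uniformly in $i$.

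The heart of the proof is \eqref{assalt3} together with \eqref{ldc}, both of which reduce to showing $\nu_\ep\to\mu$ in $H^{-1}_\loc(\R^d)$. I would treat the two summands of $\mu$ separately. The contribution from $g_1$ is the conditional expectation $\sum_i|\Ai|^{-1}\qty(\int_{\Ai}g_1\,dx)1_{\Ai}$ of $g_1$ with respect to the partition $\{\Ai\}$; Lebesgue differentiation combined with $L^d$-boundedness of the Hardy--Littlewood maximal function gives convergence in $L^d_\loc(\R^d)$, hence in $H^{-1}_\loc$ via the embedding \eqref{embedding}. For the surface part, testing against $\phi\in H_0^1(V)$ (with $V$ a fixed ball) and writing $\overline\phi_i:=|\Ai|^{-1}\int_{\Ai}\phi\,dx$, a direct computation gives
\[
\ev{\sum_i\frac{(g_2\delta_S)(\Ai)}{|\Ai|}1_{\Ai}-g_2\delta_S,\ \phi}=-\sum_i\int_{\Ai\cap S}g_2(\phi-\overline\phi_i)\,d\delta_S.
\]
A scaled Poincar\'e--trace inequality on the cube $\Ai$ yields $\|\phi-\overline\phi_i\|_{L^2(S\cap\Ai,\delta_S)}\leq C\ep^{1/2}\|\grad\phi\|_{L^2(\Ai)}$ with $C$ independent of $i$ and $\ep$; combined with $\delta_S(\Ai)\leq C\ep^{d-1}$ and Cauchy--Schwarz summed over the $O(\ep^{-(d-1)})$ tiles meeting $S$, the total error is bounded by $C\ep^{1/2}\|\phi\|_{H_0^1(V)}$, delivering $H^{-1}$-convergence.

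The main obstacle I anticipate is establishing the Poincar\'e--trace constant uniformly in $i$ and $\ep$. A typical rescaled tile contains a nearly flat $C^1$ graph for which the standard estimate applies, but tiles meeting the singular locus of $S$ or tangent to a face of $\Ai$ require separate treatment. Since $S$ is a finite union of $C^1$ pieces, such \emph{bad} tiles are $O(\ep^{-(d-2)})$ in number, a lower-order contribution that can be absorbed by the same Cauchy--Schwarz argument. Once \eqref{assalt1}--\eqref{assalt3} and \eqref{ldc} hold, \Cref{assalt} yields \Cref{asss} and \Cref{resu} then delivers $u^\ep\to u$ weakly in $H_0^1(\Omega)$.
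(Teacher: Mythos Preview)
Your global strategy---verify \Cref{assalt} with $x_i^\ep=2\ep i$, $\Ri=\ep$, $\Ai=2\ep i+(-\ep,\ep]^d$, then invoke \Cref{resu}---is exactly the paper's route (the paper packages it as \Cref{inv}, but that theorem just checks \Cref{assalt}). The identity $\capa(\Ki)=\mu(\Ai)$ and the verification of \eqref{assalt1}, \eqref{assalt2} are correct; for \eqref{assalt1} the paper uses the abstract bound $\mu(\Ai)\le c'\ep^{d-1}$ of \Cref{inv2}, valid for any $\mu\in W^{-1,\infty}_\loc(\R^d)^+$, rather than decomposing $\mu$, but either works. For the $L^d$ part of \eqref{assalt3} your maximal-function argument is fine; the paper's \Cref{dencapLp} reaches the same $L^d_\loc$ conclusion via density of smooth functions.

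Where you diverge is the surface contribution, and here the paper's argument is both different and cleaner. You aim for a direct $H^{-1}$ estimate via a scaled Poincar\'e--trace inequality on each cube, and correctly identify the obstacle: uniformity of the trace constant over all tiles that meet $S$. Your proposed fix (bad tiles are $O(\ep^{-(d-2)})$) is plausible for tiles meeting the singular set of $S$, but the rescaled surfaces in the remaining ``good'' tiles still vary with $i$ and $\ep$, and you have not supplied an argument pinning down a single constant. The paper sidesteps this entirely: it never proves the tile averages converge directly in $H^{-1}$. Instead it (i) gets distributional convergence for free by \Cref{invhint}, and (ii) exhibits a dominating sequence $\nu^\ep$ that converges in $H^{-1}_\loc$; then \Cref{conv2} upgrades (i) to norm convergence. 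For a $C^1$ graph $x_d=s(x')$ the dominant is $\frac{C}{\ep}1_{E_{C\ep}}$, where $E_{C\ep}=\{|x_d-s(x')|\le C\ep\}$, and the key observation is that $\frac{1}{2C\ep}1_{E_{C\ep}}=\pd_d N_{C\ep}$ for an explicit $N_{C\ep}\to N$ in $L^2_\loc$, so $\pd_d N_{C\ep}\to\pd_d N$ in $H^{-1}_\loc$. General piecewise $C^1$ surfaces and the weight $g_2\in L^\infty(\delta_S)$ are then handled by the stability lemmas (\Cref{invsub}, \Cref{invsum}, \Cref{invboudense}). This domination-plus-compactness approach requires no tile-by-tile trace estimates and no uniformity discussion.
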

\begin{figure}\centering
\begin{overpic}[width=8cm]{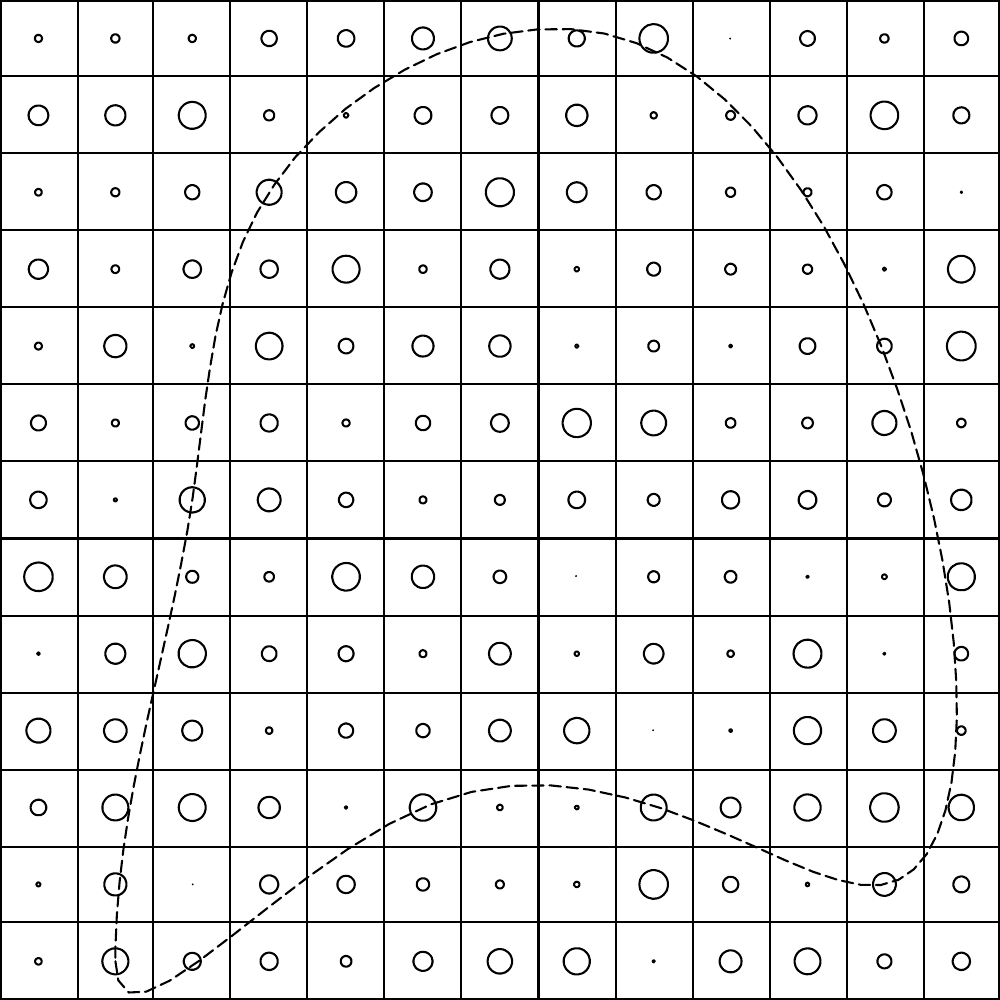}
\end{overpic}
\caption{A domain perforated by periodic balls\label{figperi}}
\end{figure}
\section{Proof}\label{spr}
In \Cref{prel}, we introduce some results needed to show \Cref{resu}.
We show \Cref{resu} in \Cref{prf}.
\subsection{Preliminaries to proof}\label{prel}
\Cref{conv2} below is based on \cite[Lemma 2.8]{CM}.
\begin{lem}\label{conv2}
Let $\{\nu_+^\ep,\nu_-^\ep\}_{\ep>0}\subset H^{-1}(\Omega)^+,~\nu\in H^{-1}(\Omega),~\nu_+^\ep-\nu_-^\ep\to\nu$ in $\D'(\Omega),$ and there exists $\{N^\ep\}_{\ep>0}\subset H^{-1}(\Omega)^+$ converging in the norm of $H^{-1}(\Omega)$, and $\nu_+^\ep+\nu_-^\ep\leq N^\ep.$ Then, we have $\norm{\nu_+^\ep-\nu_-^\ep-\nu}_{H^{-1}(\Omega)}\to 0.$
\begin{proof}
Let $t^\ep=\norm{\nu_+^\ep-\nu_-^\ep-\nu}_{H^{-1}(\Omega)},~t=\limsup_{\ep\to 0}t^\ep.$
By \cite[Lemma 2.8]{CM}\footnote{The word "strongly convergence" in \cite[Lemma 2.8]{CM} means convergence in norm.}, there exist subsequences (still denoted by $\ep$) such that $t^\ep\to t$ and $\{\nu_\pm^\ep\}$ converges in the norm of $H^{-1}(\Omega).$ The limit of $\{\nu_+^\ep-\nu_-^\ep\}\subset H^{-1}(\Omega)$ coincides with the limit $\nu$ in $\D'(\Omega)$ since $H^{-1}(\Omega)\hookrightarrow\D'(\Omega).$ Therefore, we have $t=0.$
\end{proof}
\end{lem}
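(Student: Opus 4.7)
The plan is to argue via a subsequence extraction: reduce the statement to proving that every subsequence of $t^\ep := \norm{\nu_+^\ep - \nu_-^\ep - \nu}_{H^{-1}(\Omega)}$ has a further subsequence tending to $0$. Setting $t = \limsup_{\ep \to 0} t^\ep$, it suffices to show $t = 0$, so I would pass to a subsequence realizing $t^\ep \to t$ and aim to derive $t = 0$.

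The decisive step is to upgrade the $\D'(\Omega)$-convergence of $\nu_+^\ep - \nu_-^\ep$ to norm convergence in $H^{-1}(\Omega)$. For this I would exploit the domination $0 \leq \nu_\pm^\ep \leq \nu_+^\ep + \nu_-^\ep \leq N^\ep$, together with the strong convergence of $N^\ep$ in $H^{-1}(\Omega)$. Since $\nu_\pm^\ep$ are nonnegative Borel measures dominated by a strongly convergent positive sequence, I would invoke \cite[Lemma 2.8]{CM} to extract a further subsequence along which both $\nu_+^\ep$ and $\nu_-^\ep$ converge in the norm of $H^{-1}(\Omega)$, to some limits $\nu_+$ and $\nu_-$.

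Then $\nu_+^\ep - \nu_-^\ep \to \nu_+ - \nu_-$ in the norm of $H^{-1}(\Omega)$, and hence also in $\D'(\Omega)$ via the continuous embedding $H^{-1}(\Omega) \hookrightarrow \D'(\Omega)$. Combined with the hypothesis $\nu_+^\ep - \nu_-^\ep \to \nu$ in $\D'(\Omega)$ and the uniqueness of distributional limits, this forces $\nu_+ - \nu_- = \nu$. Therefore $t^\ep \to 0$ along this subsequence, which contradicts $t > 0$ unless $t = 0$.

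The main obstacle is the strong $H^{-1}$-compactness in the second step: pointwise or weak bounds on $\nu_\pm^\ep$ alone would only give weak convergence. The essential input is the domination by a strongly converging positive $N^\ep$, and the technical content is absorbed into the black-box lemma \cite[Lemma 2.8]{CM}; the rest of the argument is a soft subsequence-and-uniqueness argument.
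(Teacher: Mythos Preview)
Your argument is correct and matches the paper's proof essentially line for line: define $t^\ep$ and $t=\limsup t^\ep$, pass to a subsequence with $t^\ep\to t$, invoke \cite[Lemma 2.8]{CM} (using the domination $\nu_\pm^\ep\leq N^\ep$ by a strongly convergent sequence) to extract norm-convergent subsequences of $\nu_\pm^\ep$, and identify the $H^{-1}$ limit with $\nu$ via $H^{-1}(\Omega)\hookrightarrow\D'(\Omega)$ to conclude $t=0$. There is no meaningful difference in approach.
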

\begin{theo}[{\cite[Theorem 1.2]{CM}}]\label{CM}
Assume there exists a sequence
\begin{equation}\tag{H.1}\label{H1}
\{w^\ep\}\subset H^1(\Omega)
\end{equation}
satisfying
\begin{equation}\tag{H.2}\label{H2}
w^\ep=0\mbox{ on }\bigcup_iK_i^\ep\mbox{ for each }\ep>0,
\end{equation}
\begin{equation}\tag{H.3}\label{H3}
w^\ep\to 1\mbox{ weakly in }H^1(\Omega),
\end{equation}
\begin{equation}\tag{H.4}\label{H4}
\mu\in W^{-1,\infty}(\Omega),
\end{equation}
and
\begin{equation}\tag{H.5}\label{H5}
\begin{aligned}
&\duS{-\la w^\ep}{g v^\ep}\to\duS{\mu}{g v}\\
&\mbox{ if }g\in\D(\Omega),~
v^\ep=0\mbox{ on }\bigcup_iK_i^\ep\mbox{ and }v^\ep\to v \mbox{ weakly in }H^1(\Omega).
\end{aligned}
\end{equation}
Then, the solutions $u^\ep$ to \eqref{poi} converges to the solution 
$u$ to \eqref{poipot} weakly in $H_0^1(\Omega).$
\end{theo}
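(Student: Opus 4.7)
The plan is to verify the hypotheses of \Cref{assalt} together with the density-of-capacity condition \eqref{ldc} and then invoke \Cref{resu}. Set $\Ri = \ep$ and $\Ai = 2\ep i + (-\ep,\ep]^d$, so that the cubes tile $\R^d$ up to a null set, $\Bi \subset \Ai$, and $\diam\Ai/\Ri = 2\sqrt{d}$, immediately giving \eqref{assalt2}. The holes are closed balls, so \Cref{assholec1} is trivial; the formula $\capa(B(x,r)) = (d-2)S_d r^{d-2}$ shows that with the chosen radii one has $\capa(\Ki) = \mu(\Ai)$ and $\ai = r_i^\ep$.

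For \eqref{assalt1}, the key input is the uniform bound $\mu(\Ai) \leq C\ep^{d-1}$. For the $L^d$ component this is H\"older's inequality, $\int_{\Ai}f\leq\|f\|_{L^d}|\Ai|^{(d-1)/d}$; for $g\delta_S$ with $g\in L^\infty(\delta_S)$, a local $C^1$ parametrization of $S$ yields $\delta_S(\Ai)\leq C_S\ep^{d-1}$. Consequently $\ai\leq C'\ep^{(d-1)/(d-2)}$ and $\ai/\Ri = O(\ep^{1/(d-2)})\to 0$.

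The main step is \eqref{assalt3}: $H^{-1}_\loc(\R^d)$-convergence of $\nu_\ep:=\sum_i \ai^{d-2}|\Ai|^{-1}1_{\Ai}$, which up to the factor $1/((d-2)S_d)$ equals the piecewise-constant projection $P_\ep\mu$ of $\mu$ onto the partition $\{\Ai\}$. I would decompose $\mu = f + g\delta_S$. For the $L^d$ piece, $P_\ep f \to f$ in $L^d$ by standard smooth approximation, and the embedding $L^d\hookrightarrow H^{-1}_\loc$ from \eqref{embedding} upgrades this to norm convergence. For the $g\delta_S$ piece, testing $P_\ep(g\delta_S) - g\delta_S$ against $\phi\in H_0^1(V)$ gives the error $-\sum_i\int_{\Ai\cap S}g(\phi-(\phi)_{\Ai})\,d\delta_S$, which I would bound by combining Poincar\'e, $\|\phi-(\phi)_{\Ai}\|_{L^2(\Ai)}\leq C\ep\|\grad\phi\|_{L^2(\Ai)}$, with a scaling-invariant trace inequality $\|u\|_{L^2(\Ai\cap S)}^2\leq C(\ep^{-1}\|u\|_{L^2(\Ai)}^2+\ep\|\grad u\|_{L^2(\Ai)}^2)$, producing an $O(\ep^{1/2}\|\grad\phi\|_{L^2(V)})$ bound after summation over $i$. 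Since $H^{-1}_\loc\hookrightarrow\D'$ and $\sum_i\capa(\Ki)|\Ai|^{-1}1_{\Ai} = (d-2)S_d\,\nu_\ep$, condition \eqref{ldc} follows automatically, so all hypotheses of \Cref{assalt} hold and \Cref{resu} delivers the claimed weak $H_0^1$ convergence.

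The hard part will be controlling the trace constant uniformly in $i$ for cells cut by $S$: one needs the above trace inequality on $\Ai\cap S$ with a constant independent of the position of the cube relative to the surface. I would handle this by covering a neighborhood of $S$ by finitely many Lipschitz-graph charts with uniform bounds, and on each chart reducing the estimate to a flat trace inequality with a constant depending only on dimension and the Lipschitz bound of the chart.
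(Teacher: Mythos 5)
Your proposal does not address the statement you were asked to prove. \Cref{CM} is an \emph{abstract} convergence criterion, quoted verbatim from Cioranescu and Murat: given an \emph{arbitrary} family of holes $\{\Ki\}$, if there exists a sequence of oscillating test functions $w^\ep$ satisfying \eqref{H1}--\eqref{H5}, then the solutions of \eqref{poi} converge weakly in $H_0^1(\Omega)$ to the solution of \eqref{poipot}. The paper does not prove it; it cites it. The hypotheses say nothing about periodic balls, cubes $\Ai$, tilings, or a potential of the form $L^d + L^\infty(\delta_S)$. What you have written is instead a proof sketch for \Cref{invperi} (the inverse homogenization result with $\Ki=\cls{B(2\ep i,(\mu(\Ai)/((d-2)S_d))^{1/(d-2)})}$): you set up the cubes $\Ai$, verify \Cref{assalt}, check \eqref{ldc}, and invoke \Cref{resu}. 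That is a different theorem in the paper, several sections later.

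Beyond being aimed at the wrong target, your strategy is circular as a proof of \Cref{CM}. You propose to deduce the conclusion by invoking \Cref{resu}, but \Cref{resu} is itself established by applying \Cref{CM} (the entire Section \ref{prf} constructs $w^\ep=1-\sum_i\phii\Hi$ precisely to verify \eqref{H1}--\eqref{H5} and then calls \Cref{CM}). An actual proof of \Cref{CM} must argue directly from energy bounds on $u^\ep$ along the lines of the original Cioranescu--Murat argument: extract a weakly convergent subsequence, test the equation with $w^\ep g$ for $g\in\D(\Omega)$, use \eqref{H2}--\eqref{H3} to pass to the limit in the commutator terms, use \eqref{H5} to identify the limit of $\duS{-\la w^\ep}{g u^\ep}$ as $\duS{\mu}{gu}$, and conclude that the limit solves \eqref{poipot}; uniqueness then gives convergence of the whole sequence. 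None of that machinery appears in your write-up. (As a separate, minor matter: your content, read as a proof of \Cref{invperi}, is reasonable in outline but compresses the paper's chain \Cref{inv2}, \Cref{invgraph}, \Cref{invsurf}, \Cref{inv} into a direct verification of \Cref{assalt}, and the uniform trace constant for cubes cut by $S$ --- which you flag as the hard point --- is exactly what the paper sidesteps by comparing with the primitive $N_\ep$ of $1_{E_{C\ep}}$ in \Cref{invgraph}. But that discussion is moot here because it is the wrong theorem.)
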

\begin{lem}[{\cite[2.2.2 Lemma 2.4.]{KM}}]
There exists $c_0>0$ such that
\begin{equation}\label{pot1}\tag{P1}
|\pd^\alpha\Hi(x)|
\leq c_0\ai^{d-2}(|x-x_i^\ep|-\ai)^{-d+2-|\alpha|}~(|x-x_i^\ep|-\ai\geq c_0\ai).
\end{equation}
\begin{proof}
See \cite[2.2.2 Lemma 2.4.]{KM} and \eqref{Jung}.
\end{proof}
\end{lem}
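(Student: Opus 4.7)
The plan is to read off the estimate from the classical potential-theoretic representation of $\Hi$, using Jung's theorem \eqref{Jung} only to convert between $\ai$ and $\diam \Ki$. By \Cref{assholec1} the boundary $\pd \Ki$ is piecewise $C^1$, so $\Hi$ can be written as the Newtonian single-layer potential of its equilibrium measure $\sigma_i$ supported on $\Ki$:
\[
\Hi(x) = \frac{1}{(d-2) S_d} \int_{\Ki} \frac{d\sigma_i(y)}{|x-y|^{d-2}}, \qquad \sigma_i(\Ki) = \capa(\Ki).
\]
Two elementary bounds then drive everything. Monotonicity of capacity combined with $\Ki \subset \cls{B(x_i^\ep, \ai)}$ and $\capa(\cls{B(x_i^\ep, \ai)}) = (d-2) S_d \ai^{d-2}$ gives the total-mass bound $\capa(\Ki) \leq (d-2) S_d \ai^{d-2}$, and the triangle inequality gives $|x - y| \geq |x - x_i^\ep| - \ai$ for every $y \in \Ki$.

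Next I differentiate under the integral sign. The hypothesis $|x - x_i^\ep| - \ai \geq c_0 \ai$ keeps $x$ uniformly outside $\cls{B(x_i^\ep, \ai)} \supset \Ki$, so the kernel $|x-y|^{-(d-2)}$ is smooth in $x$ on the region of interest, with $|\pd_x^\alpha |x-y|^{-(d-2)}| \leq C_\alpha |x-y|^{-(d-2)-|\alpha|}$. Combining the three estimates yields
\[
|\pd^\alpha \Hi(x)| \leq \frac{C_\alpha}{(d-2) S_d}\, \capa(\Ki)\, (|x-x_i^\ep| - \ai)^{-(d-2)-|\alpha|} \leq c'_\alpha \ai^{d-2} (|x-x_i^\ep| - \ai)^{-d+2-|\alpha|},
\]
which is \eqref{pot1} once $c_0$ is chosen larger than all the constants $c'_\alpha$ arising for the finitely many multi-indices actually used in the sequel.

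There is no genuine obstacle here: the argument is classical potential theory. The only bookkeeping concern is matching the form of the estimate in \cite[Lemma 2.4]{KM}, which is stated with $\diam \Ki$ in place of $\ai$. Since \eqref{Jung} gives $\ai \leq \diam \Ki \leq 2\ai$, the two normalisations differ by at most a universal factor which can be absorbed into $c_0$, so citing KM plus Jung is enough.
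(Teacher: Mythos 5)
Your proposal is correct, and it expands what the paper leaves implicit. The paper's proof is simply a citation: it points the reader to Kacimi--Murat (\cite[2.2.2 Lemma 2.4]{KM}) and uses Jung's theorem \eqref{Jung} to convert between $\diam \Ki$ (the quantity in KM's formulation) and $\ai$, absorbing the factor of at most $2^{d-2+|\alpha|}$ into $c_0$. You have instead reconstructed the classical potential-theoretic argument that underlies KM's lemma: representing $\Hi$ as the Newtonian single-layer potential of its equilibrium measure $\sigma_i$ with $\sigma_i(\Ki) = \capa(\Ki)$ (which is consistent with the paper's normalisation $\capa(\Ki) = \norm{\grad\Hi}_{L^2}^2$, since $\capa(\Ki) = -\int \Hi\,\la\Hi = \int \Hi\,d\sigma_i = \sigma_i(\Ki)$), then invoking the monotonicity bound $\capa(\Ki) \leq (d-2)S_d\ai^{d-2}$ (which also appears later as \eqref{capinc}), the triangle inequality $|x-y| \geq |x-x_i^\ep| - \ai$, and the kernel estimate $|\pd_x^\alpha|x-y|^{-(d-2)}| \leq C_\alpha|x-y|^{-(d-2)-|\alpha|}$. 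One small remark: your derivation actually gives the estimate for every $x$ outside $\cls{B(x_i^\ep,\ai)}$, not merely on $\{|x-x_i^\ep|-\ai \geq c_0\ai\}$; the stated restriction does no harm (it only weakens the claim), and choosing $c_0 \geq \max_\alpha C_\alpha$ over the finitely many multi-indices used in the paper matches the single-constant formulation. Your route is self-contained and makes the normalisation explicit, which the bare citation does not; the paper's route is shorter but offloads all of this to KM.
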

We denote $\ri=\Ri-\ai.$ 
We have some $c>1/(2c_0)$ such that
\begin{equation}\label{const}
\ai<c\ri,~\Ri<c\ri,~
\ai<\Ri<c,~|\Ai|<c\Ri^d.
\end{equation}
for $\ep,$ and $i$ by \eqref{A0}, \eqref{A1} and \eqref{A3}.
We need to estimate $\Hi$ and $\grad\Hi$ on
$\Omega_{1i}^\ep
=\Bi\setminus \Ki,~\Omega_{2i}^\ep
=\{x\mid \ri/2<|x-x_i^\ep|-\ai<\ri\}$ and $\Omega_{3i}^\ep
=\{x\mid\Ri\leq|x-x_i^\ep|\}$ (see \Cref{cutoff}).
\begin{figure}
\begin{overpic}[width=7.4cm]{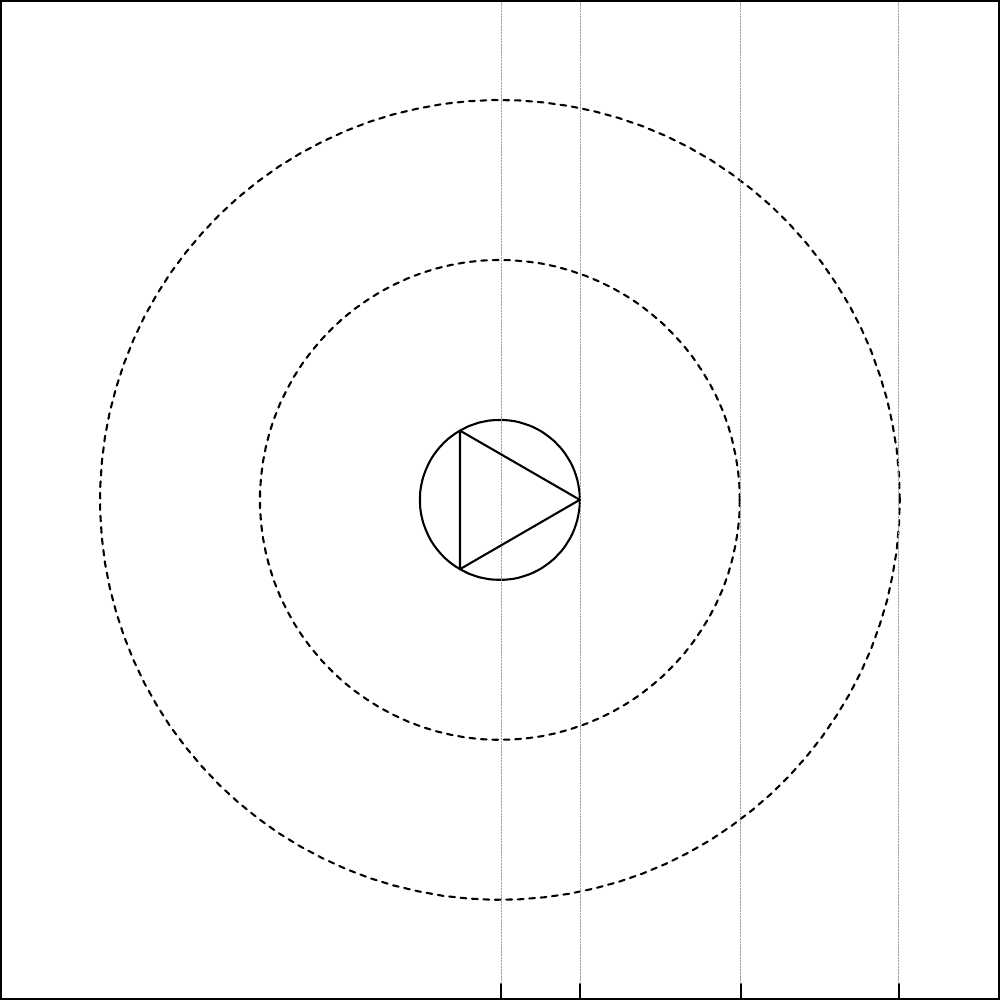}
\put(65,75){$\Omega_{2i}^\ep$}
\put(68,20){$\pd(\Omega_{1i}^\ep\cup\Ki)$}
\put(90,10){$\Omega_{3i}^\ep$}
\put(48,48){$K_i^\ep$}

\put(51,2){$\ai$}
\put(61,2){$\ri/2$}
\put(77,2){$\ri/2$}
\end{overpic}
\caption{Definition of $\Omega_{ji}^\ep~(j=1,2,3)$}\label{cutoff}
\end{figure}
\begin{lem}
There exists $c'>c_0$ such that
\begin{align}
\label{pot2}\tag{P2}
&\norm{\pd^\alpha \Hi}_{L^2(\Omega_{1i}^\ep\cup \Ki)}^2
\leq c'\ai^{d-1-|\alpha|},\\
\label{pot3}\tag{P3}
&\norm{\pd^\alpha \Hi}_{L^2(\Omega_{2i}^\ep\cup\Omega_{3i}^\ep)}^2
\leq c'\ai^{2(d-2)}\Ri^{-d-2(-2+|\alpha|)}
\end{align}
for each $\ep,~i$ and $|\alpha|\leq 1.$
\begin{proof}
We have \begin{equation}\label{uniformdiam}\sup_i \ai\to 0\end{equation} since $\ai^{d-1}\leq\ai^{d-2}\Ri\leq\sum_i\ai^{d-2}\diam\Ai\to 0$ by \eqref{const}, \eqref{supertile} and \eqref{A4}.
\begin{equation}\label{pot2'}
\int_{1/2\leq|x-x_i^\ep|\leq c}|\pd^\alpha \Hi(x)|^2dx\leq c'\ai^{2(d-2)}
\end{equation} and \eqref{pot3}
follow from \eqref{pot1}, \eqref{uniformdiam} and \eqref{const}. Since \cite[2.2.2 Lemma 2.5.]{KM}, \eqref{Jung},  \eqref{uniformdiam} and $\lim_{t\to0}t\log t=0,$ we have
\[\norm{\pd^\alpha\Hi}_{L^2(B(x_i^\ep,3/4))}^2\leq c'\ai^d(\ai^{-2|\alpha|}+\delta_{d4}\log\ai+\ai^{-1}\delta_{d3})\leq(c'+1)\ai^{d-1-|\alpha|}.\]
\eqref{pot2} follows from it, \eqref{pot2'} and $\Omega_{1i}^\ep\cup\Ki\subset\Bi\subset B(x_i^\ep,c).$
\end{proof}
\end{lem}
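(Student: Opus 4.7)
The plan is to split the estimation according to the geometry of the capacity potential $\Hi$: on the far-field region $\Omega_{2i}^\ep\cup\Omega_{3i}^\ep$ the pointwise decay estimate \eqref{pot1} is directly applicable, whereas on the near-field $\Omega_{1i}^\ep\cup\Ki\subset\cls{\Bi}$ one must use $L^2$-type bounds, since the pointwise bound degenerates on $\pd\Ki$. Before splitting I would first record the auxiliary fact $\sup_i\ai\to 0$, obtained from \eqref{A4} together with \eqref{supertile} and \eqref{A0} via $\ai^{d-1}\leq\ai^{d-2}\Ri\leq\ai^{d-2}\diam\Ai$; this uniform smallness is what eventually absorbs the logarithmic and subleading terms that appear in low dimensions.

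For \eqref{pot3}, note that $\Omega_{2i}^\ep\cup\Omega_{3i}^\ep\subset\{|x-x_i^\ep|-\ai\geq\ri/2\}$, and by \eqref{const} this distance exceeds $c_0\ai$, so \eqref{pot1} is valid there. Integrating the pointwise square in polar coordinates gives
\[
\int_{\ri/2}^{\infty}\ai^{2(d-2)}t^{d-1-2(d-2+|\alpha|)}\,dt\lesssim\ai^{2(d-2)}\ri^{-d+4-2|\alpha|},
\]
and passing from $\ri$ to $\Ri$ via $\Ri\leq c\ri$ from \eqref{const} yields \eqref{pot3}.

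For \eqref{pot2}, I would decompose $\Omega_{1i}^\ep\cup\Ki\subset\Bi\subset B(x_i^\ep,c)$ into the inner ball $B(x_i^\ep,3/4)$ and an outer annulus $\{1/2\leq|x-x_i^\ep|\leq c\}$. On the annulus \eqref{pot1} applies just as in the proof of \eqref{pot3} and yields a contribution bounded by a multiple of $\ai^{2(d-2)}$; since $d\geq 3$ and $\sup_i\ai\to 0$, this is dominated by $\ai^{d-1-|\alpha|}$. The inner ball is the main obstacle: the pointwise bound \eqref{pot1} breaks down there, so I would invoke the local $L^2$ estimate \cite[2.2.2 Lemma 2.5.]{KM} for the Newtonian potential of $\Ki\subset\cls{B(x_i^\ep,\ai)}$ (cf.~\eqref{Jung}), which produces a bound of the form $\ai^d(\ai^{-2|\alpha|}+\delta_{d4}|\log\ai|+\delta_{d3}\ai^{-1})$. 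The $\log$ and $\ai^{-1}$ corrections present only in dimensions $d=3,4$ are absorbed by combining $\sup_i\ai\to 0$ with $\lim_{t\to 0}t\log t=0$, producing \eqref{pot2}.
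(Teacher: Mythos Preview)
Your proposal is correct and follows essentially the same route as the paper: first derive $\sup_i\ai\to0$, then obtain \eqref{pot3} directly from the pointwise decay \eqref{pot1} by radial integration, and finally prove \eqref{pot2} by splitting $\Bi\subset B(x_i^\ep,c)$ into the inner ball $B(x_i^\ep,3/4)$ (handled via \cite[2.2.2 Lemma 2.5.]{KM}) and the outer annulus $\{1/2\leq|x-x_i^\ep|\leq c\}$ (handled via \eqref{pot1}). One small correction: in your derivation of $\sup_i\ai\to0$, the inequality $\ai\leq\Ri$ comes from \eqref{A1} (equivalently \eqref{const}), not from \eqref{A0}.
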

\begin{proof}[Proof of \Cref{ldcconvtop}]
Increasing property of capacity $\capa$ implies
\begin{equation}\label{capinc}
\capa(\Ki)\leq\capa(\cls{B(x_i^\ep,\ai)})=(d-2)S_d\ai^{d-2}.
\end{equation}
\eqref{A5}, \Cref{conv2} and $\sum_i\frac{\capa(\Ki)}{|\Ai|}1_{\Ai}
\leq(d-2)S_d\sum_i\frac{\ai^{d-2}}{|\Ai|}1_{\Ai}$ give \eqref{ldc3}.

Let $g\in\D(\Omega)$ and $b_\ep=\abs{\ev{\sum_i\frac{\capa(\Ki)}{|\Ai|}1_{\Ai}-\mu,g}}.$  Since \eqref{ldc}, \eqref{A4} and \eqref{capinc}, we have
\begin{align*}
&\abs{\ev{\sum_i\frac{\capa(\Ki)}{|\Ki|}1_{\Ki}-\mu,g}}
\leq\sum_i\capa(\Ki)\abs{\ev{\frac{1_{\Ki}}{|\Ki|}-\frac{1_{\Ai}}{|\Ai|},g}}+b_\ep\\
\leq&\sum_i\frac{(d-2)S_d\ai^{d-2}}{|\Ki||\Ai|}\int_{\Ai}\int_{\Ki}|g(x)-g(y)|dxdy+b_\ep\\
\leq&(d-2)S_d\Lip{g}\sum_i\ai^{d-2}\diam\Ai+b_\ep
\to0.
\end{align*}
Therefore, we have \eqref{ldc2}.
\end{proof}
\subsection{Proof of the main result}\label{prf}
We show \Cref{resu} by applying \Cref{CM}.
Let $\phi\in C^2(\R,[0,1])$ satisfying $\phi(x)=\begin{cases}1&(x\leq1/2)\\0&(x>1)\end{cases}.$
Let $\phii(x)=\phi(\frac{|x-x_i^\ep|-\ai}{\ri})$ for each $\ep,~i$ (see \Cref{cutoff}). We remark
\[\supp\phii\subset\cls{\Omega_{1i}^\ep\cup\Ki},~\supp\grad\phii\subset\cls{\Omega_{2i}^\ep}\mbox{ (see \Cref{cutoff}) }.\]
We let
\[w^\ep=1-\sum_i\phii\Hi.\]
$\{w^\ep\}$ satisfies \eqref{H1} and \eqref{H2} by \eqref{cappot}.
\begin{proof}[Proof of \eqref{H3}]
It is enough to show $V^\ep\coloneqq\sum_i\phii\Hi\to 0$ weakly in $H^1(\Omega)$ to prove \eqref{H3}.
Since \eqref{A4}, \eqref{pot2}, \eqref{supertile} and \eqref{const}, we have
\begin{equation}\label{VepL2}
\norm{V^\ep}_{L^2(\Omega)}^2\leq \sum_i\norm{\Hi}_{L^2(\Omega_{1i}^\ep\cup\Ki)}^2\leq c'\sum_i\ai^{d-2}\diam\Ai\to 0.
\end{equation}
$\{V^\ep\}\subset H^1(\Omega)$ is bounded since \eqref{const}, \eqref{VepL2}, \eqref{A6}, \eqref{A2}, \eqref{pot2}, \eqref{pot3} and
\begin{align*}
\norm{\grad V^\ep}_{L^2(\Omega)}^2
&\leq2 \sum_i\qty(\norm{\grad\Hi}_{L^2(\Omega_
{1i}^\ep\cup\Ki
)}^2+\ri^{-2}\norm{\phi'}_\infty^2\norm{\Hi}_{L^2(\Omega_{2i}^2)}^2)\\
&\leq c''\sum_i\qty(\ai^{d-2}+\ai^{2(d-2)}\Ri^{-d+2}).
\end{align*}
The weak limit of any subsequence of $\{V^\ep\}$ converging weakly in $H^1(\Omega)$ is $0$ by Rellich's theorem and \eqref{VepL2}. Therefore, $V^\ep\to 0$ weakly in $H_0^1(\Omega).$
\end{proof}
We denote $n^\ep$ the outer-word unit vector at $\bigcup_i\Bi.$ We see the sequence in \Cref{convdistr} appear in the proof of \eqref{H5} later.
\begin{lem}\label{convdistr}We have
\[-\sum_i\grad \Hi\cdot n^\ep\delta_{\pd \Bi}\to\mu\mbox{~in~}\D'(\Omega).\]
\begin{proof}
By \eqref{cappot}, we have
\begin{align}
\begin{aligned}
\label{capdecompose}
&\capa(\Ki)-\norm{\grad \Hi}_{L^2(\Omega_{3i}^\ep)}^2
=\norm{\grad(1-\Hi)}_{L^2(\Omega_{1i}^\ep)}^2\\
=&-\int_{\pd\Bi}(1-\Hi)\grad\Hi\cdot n^\ep dS
=-\int_{\pd\Bi}\grad\Hi\cdot n^\ep dS+b_i^\ep
\end{aligned}
\end{align}
where
$b_i^\ep=\int_{\pd\Bi}\Hi\grad\Hi\cdot n^\ep dS.$ Let $g\in\D(\Omega)$ and \[I_5^\ep=\ev{\sum_i\frac{\capa(\Ki)}{|\Ai|}1_{\Ai}-\mu,g}.\]
Since \eqref{capdecompose}, we have
\begin{align*}
&\ev{-\sum_i\grad\Hi\cdot n^\ep\delta_{\pd\Bi}-\mu,g}\\
&=\sum_i\ev{-\grad\Hi\cdot n^\ep\delta_{\pd\Bi}-\frac{\capa(\Ki)}{|\Ai|}1_{\Ai},g}+I_5^\ep\\
&=\sum_i\int_{\pd \Bi}\grad\Hi\cdot n^\ep(g(x_i^\ep)-g)dS+\sum_i\capa(\Ki)\qty(g(x_i^\ep)-\ev{\frac{1_{\Ai}}{|\Ai|},g})\\
&-\sum_ig(x_i^\ep)\norm{\grad\Hi}_{L^2(\Omega_{3i}^\ep)}^2+\sum_i g(x_i^\ep)b_i^\ep+I_5^\ep
\eqqcolon I_1^\ep+I_2^\ep+I_3^\ep+I_4^\ep+I_5^\ep.
\end{align*}
Since \eqref{supertile}, \eqref{const}, \eqref{pot1} and \eqref{A4}, we have
\[\abs{I_1^\ep}\leq c_0S_dc^{d-1}\Lip{g}\sum_i\ai^{d-2}\diam\Ai\to0\]
Since \eqref{capinc} and \eqref{A4}, we have
\[\abs{I_2^\ep}
\leq(d-2)S_d\Lip{g}\sum_i\ai^{d-2}\diam\Ai\to 0.\]
$I_3^\ep\to 0$ follows from \eqref{pot3} and \eqref{A2}. Since \eqref{pot1}, \eqref{const} and \eqref{A2}, we have
\[\abs{I_4^\ep}\leq S_dc_0^2\norm{g}_{L^\infty(\Omega)}\sum_i\ai^{2(d-2)}\Ri^{-d+2}\to0.\]
$I_5^\ep\to 0$ follows from \eqref{ldc}.
Therefore, the assertion follows.
\end{proof}
\end{lem}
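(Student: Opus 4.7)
The plan is to pair the distribution with an arbitrary test $g\in\D(\Omega)$, reduce each boundary integral on $\pd\Bi$ to the capacity $\capa(\Ki)$ via Green's identity, and invoke \eqref{ldc} to identify the limit as $\mu$.

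First, split $g(x)=g(x_i^\ep)+(g(x)-g(x_i^\ep))$ on each $\pd\Bi$. The fluctuation sum $\sum_i\int_{\pd\Bi}\grad\Hi\cdot n^\ep\,(g-g(x_i^\ep))\,dS$ is controlled using $|g-g(x_i^\ep)|\leq\Lip{g}\Ri$, the pointwise decay \eqref{pot1} for $|\grad\Hi|$ on $\pd\Bi$, and $|\pd\Bi|=S_d\Ri^{d-1}$, giving $O(\ai^{d-2}\Ri)$ per index. Thanks to \eqref{supertile} one has $\Ri\leq\diam\Ai$, so the total is dominated by $\sum_i\ai^{d-2}\diam\Ai$, which vanishes by \eqref{A4}.

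For the frozen piece $\sum_i g(x_i^\ep)\int_{\pd\Bi}\grad\Hi\cdot n^\ep\,dS$, I would apply Green's identity on $\Omega_{1i}^\ep=\Bi\setminus\Ki$ to the harmonic function $1-\Hi$ (with vanishing trace on $\pd\Ki$) to obtain $\norm{\grad(1-\Hi)}_{L^2(\Omega_{1i}^\ep)}^2=-\int_{\pd\Bi}(1-\Hi)\grad\Hi\cdot n^\ep\,dS$, and then decompose $\capa(\Ki)=\norm{\grad\Hi}_{L^2(\Omega_{1i}^\ep)}^2+\norm{\grad\Hi}_{L^2(\Omega_{3i}^\ep)}^2$ (since $\grad\Hi=0$ on $\Ki$). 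Combining yields the key identity
\[
-\int_{\pd\Bi}\grad\Hi\cdot n^\ep\,dS=\capa(\Ki)-\norm{\grad\Hi}_{L^2(\Omega_{3i}^\ep)}^2-b_i^\ep,
\]
where $b_i^\ep=\int_{\pd\Bi}\Hi\grad\Hi\cdot n^\ep\,dS$. The sum $\sum_i g(x_i^\ep)\norm{\grad\Hi}_{L^2(\Omega_{3i}^\ep)}^2$ is bounded via \eqref{pot3} by $\norm{g}_{L^\infty(\Omega)}\sum_i\ai^{2(d-2)}\Ri^{-d+2}$, which vanishes by \eqref{A2}; the $b_i^\ep$ contribution is estimated analogously by applying \eqref{pot1} to both $\Hi$ and $\grad\Hi$ on $\pd\Bi$ and yields the same bound.

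The surviving main term $\sum_i g(x_i^\ep)\capa(\Ki)$ differs from $\ev{\sum_i\tfrac{\capa(\Ki)}{|\Ai|}1_{\Ai},g}$ by at most $\Lip{g}\sum_i\capa(\Ki)\diam\Ai$. Monotonicity of capacity gives $\capa(\Ki)\leq\capa(\cls{B(x_i^\ep,\ai)})=(d-2)S_d\ai^{d-2}$, so this error is again controlled by \eqref{A4}, and the conclusion then follows from \eqref{ldc}. The main obstacle is keeping the sign and orientation bookkeeping consistent when applying Green's identity on both sides of $\pd\Bi$: the outer normals seen from $\Omega_{1i}^\ep$ and from $\Omega_{3i}^\ep$ are opposite, and they must cooperate so that the single flux $\int_{\pd\Bi}\grad\Hi\cdot n^\ep$ emerges cleanly from the decomposition of $\capa(\Ki)$.
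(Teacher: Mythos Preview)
Your proposal is correct and follows essentially the same route as the paper: you derive the same key identity $-\int_{\pd\Bi}\grad\Hi\cdot n^\ep\,dS=\capa(\Ki)-\norm{\grad\Hi}_{L^2(\Omega_{3i}^\ep)}^2-b_i^\ep$ via Green's formula on $\Omega_{1i}^\ep$, and your five error contributions (fluctuation of $g$, exterior energy, the $b_i^\ep$ term, replacement of $g(x_i^\ep)$ by the $\Ai$-average, and the (ldc) remainder) coincide with the paper's $I_1^\ep,\dots,I_5^\ep$, estimated by the same tools \eqref{pot1}, \eqref{pot3}, \eqref{A2}, \eqref{A4}, and \eqref{capinc}.
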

We need to improve \Cref{convdistr} to convergence in $H^{-1}(\Omega).$ Then, we first show two lemmas below.
\begin{lem}\label{convnormpre1}
$\{\sum_i\ai^{d-2}\Ri^{-d}1_{\Bi}\}$ converges in the norm of $H^{-1}(\Omega).$
\begin{proof}
Let $\nu$ be the limit of \eqref{A5}, $g\in\D(\Omega),$ and $b_\ep=\abs{\ev{\nu-\sum_i\frac{\ai^{d-2}}{|\Ai|}1_{\Ai},g}}.$
Since \eqref{supertile} and \eqref{A4}, we have
\begin{align*}
\abs{\ev{\sum_i\frac{\ai^{d-2}}{|\Bi|}1_{\Bi}-\nu,g}}
&\leq\sum_i\ai^{d-2}\abs{\ev{\frac{1_{\Bi}}{|\Bi|}-\frac{1_{\Ai}}{|\Ai|},g}}+b_\ep\\
&\leq\Lip{g}\sum_i\ai^{d-2}\diam\Ai+b_\ep\to0.
\end{align*}
Since \eqref{const}, have 
\[\sum_i\frac{\ai^{d-2}}{|\Bi|}1_{\Bi}\leq \frac{c}{|B(0,1)|}\sum_i\frac{\ai^{d-2}}{|\Ai|}1_{\Ai}.\]
Applying \Cref{conv2} and \eqref{A5} for them, we have the assertion.
\end{proof}
\end{lem}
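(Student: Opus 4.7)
The plan is to apply \Cref{conv2} to the trivial splitting $\nu_+^\ep = \sum_i \frac{\ai^{d-2}}{|\Bi|} 1_{\Bi}$ and $\nu_-^\ep = 0$, then translate the resulting norm convergence to the statement of the lemma. Because $|\Bi| = |B(0,1)| \Ri^d$, the sequence appearing in the statement differs from $\nu_+^\ep$ only by the fixed constant $|B(0,1)|$, so norm convergence of $\nu_+^\ep$ in $H^{-1}(\Omega)$ is equivalent to the claim. To invoke \Cref{conv2} I need (i) a candidate limit $\nu \in H^{-1}(\Omega)$ with $\nu_+^\ep \to \nu$ in $\D'(\Omega)$, and (ii) a dominating sequence $N^\ep \in H^{-1}(\Omega)^+$ converging in norm such that $\nu_+^\ep \leq N^\ep$.

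For (i), take $\nu$ to be the $H^{-1}(\Omega)$-limit furnished by \eqref{A5}. Pairing the difference $\sum_i \frac{\ai^{d-2}}{|\Bi|}1_{\Bi} - \sum_i \frac{\ai^{d-2}}{|\Ai|}1_{\Ai}$ with $g \in \D(\Omega)$, and using $\Bi \subset \Ai$ so that both $\Bi$ and $\Ai$ lie in a ball of diameter $\diam \Ai$, the two averages of $g$ differ by at most $\Lip{g}\, \diam \Ai$. Summing gives a bound $\Lip{g} \sum_i \ai^{d-2} \diam \Ai$, which tends to $0$ by \eqref{A4}. Combined with \eqref{A5} tested against $g$, this yields $\nu_+^\ep \to \nu$ in $\D'(\Omega)$.

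For (ii), set $N^\ep = \frac{c}{|B(0,1)|} \sum_i \frac{\ai^{d-2}}{|\Ai|} 1_{\Ai}$, which converges in the norm of $H^{-1}(\Omega)$ by \eqref{A5}. The pointwise bound $\nu_+^\ep \leq N^\ep$ follows from $1_{\Bi} \leq 1_{\Ai}$ together with $|\Ai| \leq c \Ri^d = \frac{c}{|B(0,1)|} |\Bi|$, the latter being the last inequality in \eqref{const}. Then \Cref{conv2} promotes $\D'$-convergence to norm convergence, which is the assertion of the lemma (up to the harmless factor $|B(0,1)|$).

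No step here is particularly subtle; the only thing to be careful about is tracking the normalisation constants so that the pointwise domination genuinely holds with the correct sign, and making sure $\nu_+^\ep$ and $N^\ep$ are treated as nonnegative Borel measures in $H^{-1}(\Omega)^+$, which is automatic because each is a nonnegative finite measure dominated by the norm-convergent sequence from \eqref{A5}.
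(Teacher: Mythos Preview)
Your proof is correct and follows essentially the same route as the paper: establish $\D'(\Omega)$-convergence of $\sum_i \frac{\ai^{d-2}}{|\Bi|}1_{\Bi}$ to the limit $\nu$ from \eqref{A5} via the Lipschitz estimate and \eqref{A4}, dominate pointwise by $\frac{c}{|B(0,1)|}\sum_i\frac{\ai^{d-2}}{|\Ai|}1_{\Ai}$ using \eqref{const}, and conclude by \Cref{conv2}. The only cosmetic difference is that you make the harmless factor $|B(0,1)|$ explicit, whereas the paper silently identifies the two normalisations.
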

\begin{lem}\label{convnormpre2}
$\{\sum_i\ai^{d-2}\Ri^{-d+1}\delta_{\pd\Bi}\}$ converges in the norm of $H^{-1}(\Omega).$
\begin{proof}
Let
$\displaystyle q^\ep(x)=\sum_i\begin{dcases}
\frac{\ai^{d-2}}{2\Ri^d}(|x-x_i^\ep|^2-\Ri^2)&(x\in\Bi)\\
0&(x\notin\Bi)
\end{dcases}.$
Take any $h\in H_0^1(\Omega).$ We have $\norm{\grad q^\ep}_{L^2(\Omega)}^2\to0
$ by \eqref{A2}, and
\[\ev{\grad q^\ep,\grad h}
=\sum_i\ev{\ai^{d-2}\Ri^{-d+1}\delta_{\pd\Bi},h}-d\sum_i\ev{\ai^{d-2}\Ri^{-d}1_{\Bi},h}.\]
The assertion follows from them and \Cref{convnormpre1}.
\end{proof}
\end{lem}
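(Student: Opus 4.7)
The plan is to construct an auxiliary function $q^\ep$ supported in the disjoint balls $\Bi$ whose gradient has vanishing $L^2(\Omega)$-norm, and such that integration by parts relates the target surface distribution to the bulk object already handled by \Cref{convnormpre1}. Since \eqref{supertile} makes the balls disjoint, I can specify $q^\ep$ radially on each $\Bi$ and extend by zero outside, with the extension landing in $H^1(\Omega)$ provided $q^\ep$ vanishes on each $\pd\Bi$.

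On each $\Bi$ I need three things: $q^\ep|_{\pd\Bi}=0$ so the zero extension is in $H^1$; outward normal derivative equal to $\ai^{d-2}\Ri^{-d+1}$ on $\pd\Bi$ so that the desired surface distribution appears after the divergence theorem; and $\norm{\grad q^\ep}_{L^2(\Bi)}^2\lesssim \ai^{2(d-2)}\Ri^{-d+2}$ so that \eqref{A2} forces the total gradient energy to vanish. The quadratic radial ansatz $q^\ep(x)=\frac{\ai^{d-2}}{2\Ri^d}(|x-x_i^\ep|^2-\Ri^2)$ on $\Bi$ delivers all three: it vanishes on the sphere, its gradient is $\frac{\ai^{d-2}}{\Ri^d}(x-x_i^\ep)$ whose radial component on $\pd\Bi$ is exactly $\ai^{d-2}\Ri^{-d+1}$, and the pointwise bound $|\grad q^\ep|\leq \ai^{d-2}\Ri^{-d+1}$ on $\Bi$ yields the required $L^2$ estimate.

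Applying the divergence theorem on each ball to $h\in H_0^1(\Omega)$ then gives
\[\ev{\grad q^\ep,\grad h}=\ev{\sum_i\ai^{d-2}\Ri^{-d+1}\delta_{\pd\Bi},h}-d\ev{\sum_i\ai^{d-2}\Ri^{-d}1_{\Bi},h},\]
so in $H^{-1}(\Omega)$ the target surface distribution equals the distributional Laplacian functional $h\mapsto\ev{\grad q^\ep,\grad h}$ plus $d$ times the bulk object of \Cref{convnormpre1}. The first summand has $H^{-1}(\Omega)$-norm controlled by $\norm{\grad q^\ep}_{L^2(\Omega)}$, which vanishes by \eqref{A2}; the second converges in the norm of $H^{-1}(\Omega)$ by \Cref{convnormpre1}. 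Summing gives norm convergence of the surface distributions.

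The main delicate point is designing $q^\ep$ so that the scaling simultaneously matches the prescribed boundary coefficient $\ai^{d-2}\Ri^{-d+1}$ and deposits the gradient energy into exactly the quantity $\sum_i\ai^{2(d-2)}\Ri^{-d+2}$ that \eqref{A2} controls; the quadratic ansatz is essentially forced by matching these two requirements, and the reduction to \Cref{convnormpre1} handles the remainder cleanly because the bulk term arising from $\la q^\ep$ is proportional to $\ai^{d-2}\Ri^{-d}1_{\Bi}$.
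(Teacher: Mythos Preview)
Your proposal is correct and follows essentially the same route as the paper: the same quadratic radial function $q^\ep$ on each $\Bi$, the same gradient-energy estimate via \eqref{A2}, and the same integration-by-parts identity reducing the surface distribution to the bulk term of \Cref{convnormpre1}. The only difference is that you spell out the motivation for the ansatz, which the paper omits.
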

Now we improve \Cref{convdistr} as below.
\begin{lem}\label{convnorm}
We have
\[\norm{-\sum_i\grad \Hi\cdot n^\ep\delta_{\pd\Bi}-\mu}_{H^{-1}(\Omega)}\to 0.\]
\begin{proof}
\eqref{pot1} and \eqref{const} gives $\sum_i|\grad\Hi\cdot n^\ep|\delta_{\pd\Bi}
\leq c''\sum_i\ai^{d-2}\Ri^{-d+1}\delta_{\pd\Bi}.$
Applying \Cref{conv2} for \Cref{convdistr} and \Cref{convnormpre2}, we have the assertion.
\end{proof}
\end{lem}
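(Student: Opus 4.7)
The plan is to upgrade the distributional convergence established in \Cref{convdistr} to convergence in the norm of $H^{-1}(\Omega)$ by invoking \Cref{conv2}. To do so, I need to realize the signed measure $-\sum_i \grad \Hi \cdot n^\ep \delta_{\pd\Bi}$ as a difference $\nu_+^\ep - \nu_-^\ep$ of nonnegative elements of $H^{-1}(\Omega)^+$ and produce a dominating sequence $N^\ep \geq \nu_+^\ep + \nu_-^\ep$ that converges in $H^{-1}(\Omega)$-norm.

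For the decomposition, I would simply take the Hahn (or positive/negative-part) decomposition on each $\pd\Bi$, so that $\nu_+^\ep + \nu_-^\ep = \sum_i |\grad \Hi \cdot n^\ep|\,\delta_{\pd\Bi}$. For the pointwise control of $|\grad \Hi|$ on $\pd\Bi$, the key input is the decay estimate \eqref{pot1} applied with $|\alpha|=1$: on $\pd\Bi$ we have $|x - x_i^\ep| - \ai = \ri$, and by \eqref{const} $\ri$ is comparable to both $\ai$ (so that the hypothesis $|x-x_i^\ep|-\ai \geq c_0\ai$ in \eqref{pot1} is satisfied, after enlarging $c_0$ if needed) and to $\Ri$. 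This yields $|\grad \Hi(x)| \leq c''\,\ai^{d-2}\Ri^{-d+1}$ on $\pd\Bi$, and hence the domination
\[
\nu_+^\ep + \nu_-^\ep \;\leq\; c''\sum_i \ai^{d-2}\Ri^{-d+1}\,\delta_{\pd\Bi} \;=:\; N^\ep.
\]

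The dominating sequence $N^\ep$ converges in the norm of $H^{-1}(\Omega)$ by \Cref{convnormpre2}, which is exactly why that lemma was established in the preceding step. With $\nu_+^\ep - \nu_-^\ep \to \mu$ in $\D'(\Omega)$ from \Cref{convdistr}, $\mu \in W^{-1,\infty}(\Omega) \subset H^{-1}(\Omega)$ by the embedding \eqref{embedding}, and the domination above, the hypotheses of \Cref{conv2} are all met, giving the claimed norm convergence.

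The only nontrivial checkpoint is the pointwise gradient bound on $\pd\Bi$: one must verify that the distance condition in \eqref{pot1} holds uniformly, which follows directly from \eqref{const}, and that the $\ri^{-d+1}$ factor can be traded for $\Ri^{-d+1}$, again by \eqref{const}. Everything else is a clean application of the two preparatory lemmas, so I do not expect real difficulty beyond bookkeeping the constants.
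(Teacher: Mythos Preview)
Your proposal is correct and follows essentially the same route as the paper: bound $\sum_i|\grad\Hi\cdot n^\ep|\,\delta_{\pd\Bi}$ via \eqref{pot1} and \eqref{const} by a constant times $\sum_i\ai^{d-2}\Ri^{-d+1}\delta_{\pd\Bi}$, then feed \Cref{convdistr} and \Cref{convnormpre2} into \Cref{conv2}. One small wording slip: to meet the hypothesis $|x-x_i^\ep|-\ai\geq c_0\ai$ of \eqref{pot1} you want $\ri\geq c_0\ai$, which comes from $\ai<c\,\ri$ in \eqref{const}; ``enlarging $c_0$'' would go the wrong way, but the intended comparability is exactly what the paper invokes.
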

\begin{proof}[Proof of \eqref{H5}]
Consider $g,~\{v^\ep\},v$ satisfying the assumptions in \eqref{H5} and let $u^\ep=g v^\ep,~u=g v.$
We have
\begin{align*}
&\ev{\grad w^\ep,\grad u^\ep}=-\sum_i(\grad u^\ep,\grad(\Hi\phii))_{L^2(\Bi)}\\
=&-\sum_i(\grad u^\ep,\Hi\grad\phii)_{L^2(\Omega_{2i}^\ep)}-\sum_i(\grad u^\ep,(\phii-1)\grad \Hi)_{L^2(\Omega_{2i}^\ep)}\\
&-\sum_i(\grad u^\ep,\grad\Hi)_{L^2(\Bi)}\eqqcolon J_1^\ep+J_2^\ep+J_3^\ep.
\end{align*}
Since \eqref{cappot}, \Cref{convnorm} and $u^\ep\to u$ weakly in $H_0^1(\Omega),$ we have \[J_3^\ep=\ev{-\sum_i\grad\Hi\cdot n^\ep\delta_{\pd\Bi},u^\ep}\to\ev{\mu,u}.\] Since $\{u^\ep\}_{\ep}\subset H^1(\Omega)$ is bounded, \eqref{pot3}, \eqref{const} and \eqref{A2}, we have
\[\abs{J_1^\ep}
\leq \norm{\phi'}_\infty\sqrt{\sum_i\norm{u^\ep}_{H^1(\Omega_{2i}^\ep)}^2}\sqrt{\sum_i\ri^{-2}\norm{\Hi}_{L^2(\Omega_{2i}^\ep)}^2}
\leq c''\sqrt{\sum_i\ai^{2(d-2)}\Ri^{-d+2}}\to0\]
and $J_2^\ep\to 0$ similarly.
Therefore, \eqref{H5} follows.
\end{proof}
\begin{proof}[Proof of \Cref{resu}.]
\Cref{resu} follows from \Cref{CM}.
\end{proof}
\section{Proof of \Cref{invperi}}\label{sinv}
In \Cref{prelinv}, we show \Cref{inv} which is a generalized result for \Cref{invperi}. However, \Cref{w-1infloc} for potential $\mu$ in \Cref{inv} is unusual.
In \Cref{potexamp}, we show $\mu\in L^d(\R^d)+L^\infty(\delta_S)$ satisfies \Cref{w-1infloc}, where $S$ is a piecewise $C^1$ surface.
\subsection{Inverse for homogenization problem}\label{prelinv}
\begin{ass}\label{tiles}
$A\subset\R^d$ is a bounded Borel set$,~\Lambda^\ep=\Lambda\subset\R^d~(\ep>0)$ is countable such that
\[\bigcup_{i\in\Lambda}(A+i)=\R^d\mbox{ and }\{A+i\}_{i\in\Lambda}\mbox{~is a family of disjoint sets}.\]
\end{ass}
We denote $\Ai=\ep(A+i)$ for each $\ep,~i$ (see \Cref{fignonperi}).
\Cref{invhint} below is a hint to construct $\{\Ki\}$ for given $\mu.$
\begin{lem}\label{invhint}
Let $\mu\in\D'(\R^d)$ be a non-negative Borel measure. Under \Cref{tiles}, we have
\[\sum_i\frac{\mu(\Ai)}{|\Ai|}1_{\Ai}\to\mu\mbox{ in }\D'(\R^d)
.\]
\begin{proof}
Let $\Omega_1=\cls{\bigcup_{x\in\Omega}B(x,\diam A)}.$ Let $g\in\D(\R^d).$ Since $\bigcup_i\Ai\subset\Omega_1$ for $\ep\ll1,$ we have
\begin{align*}
\abs{\ev{\sum_i\frac{\mu(\Ai)}{|\Ai|}1_{\Ai}-\mu,g}}
&=\abs{\sum_i\frac{1}{|\Ai|}\int_{\Ai}\int_{\Ai}\qty(g(x)-g(y))dxd\mu(y)}\\
&\leq\Lip{g}\sum_i\mu(\Ai)\diam\Ai
\leq\ep\Lip{g}\mu(\Omega_1)\diam A\to0.
\end{align*}
\end{proof}
\end{lem}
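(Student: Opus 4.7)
The plan is to test $\sum_i \frac{\mu(\Ai)}{|\Ai|} 1_{\Ai} - \mu$ against an arbitrary $g \in \D(\R^d)$ and show the pairing is $O(\ep)$. The key observation is that since $\{A+i\}_{i\in\Lambda}$ tiles $\R^d$, so does the rescaled family $\{\Ai\}_{i\in\Lambda}$; consequently $\ev{\mu, g} = \sum_i \int_{\Ai} g\,d\mu$, matching the index structure of the other term. Using the trivial identity
\[
\frac{\mu(\Ai)}{|\Ai|}\int_{\Ai} g(x)\,dx = \frac{1}{|\Ai|}\int_{\Ai}\int_{\Ai} g(x)\,dx\,d\mu(y),
\]
together with the analogous rewrite of $\int_{\Ai} g(y)\,d\mu(y)$, I can express the difference on the $i$-th tile as a double integral of $g(x) - g(y)$ over $\Ai \times \Ai$ weighted by $\frac{1}{|\Ai|}\,dx\,d\mu(y)$.

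Next I would fix a bounded open neighborhood $\Omega_1$ of $\supp g$ of width at least $\diam A$. Since $\diam \Ai = \ep\,\diam A$, for all $\ep \leq 1$ every tile $\Ai$ that meets $\supp g$ is entirely contained in $\Omega_1$, while the remaining tiles contribute nothing. Applying the Lipschitz bound $|g(x) - g(y)| \leq \Lip{g}\,\diam\Ai = \ep\,\Lip{g}\,\diam A$ uniformly on each $\Ai \times \Ai$ and summing yields
\[
\abs{\ev{\sum_i \tfrac{\mu(\Ai)}{|\Ai|} 1_{\Ai} - \mu,\, g}} \leq \ep\,\Lip{g}\,\diam A \sum_{\Ai \subset \Omega_1} \mu(\Ai) \leq \ep\,\Lip{g}\,\diam A \cdot \mu(\Omega_1),
\]
which tends to $0$ as $\ep \to 0$ provided $\mu(\Omega_1) < \infty$.

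The only technical point is precisely this local finiteness: any non-negative Borel measure lying in $\D'(\R^d)$ is automatically locally finite, since testing against a nonnegative $\D$-bump dominating $1_{\Omega_1}$ produces a finite value. Beyond this, there is no significant obstacle — the argument is a standard Lipschitz–Riemann-sum comparison, and the tiling hypothesis in \Cref{tiles} is tailored exactly to make the rearrangement into a single sum over $i$ legitimate.
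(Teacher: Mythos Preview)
Your proposal is correct and follows essentially the same approach as the paper: rewrite the pairing as a sum over tiles of double integrals of $g(x)-g(y)$, apply the Lipschitz bound, and use $\diam\Ai=\ep\,\diam A$. Your version is in fact slightly tidier, since you take $\Omega_1$ to be a neighborhood of $\supp g$ (appropriate for convergence in $\D'(\R^d)$) and you make explicit the local finiteness of $\mu$, which the paper leaves implicit.
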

We consider a class of potentials as below.
\begin{ass}\label{w-1infloc}
$\mu\in W^{-1,\infty}_\loc(\R^d)^+$ and there exists $\{\nu^\ep\}\subset H^{-1}_\loc(\R^d)^+$ converging in $H^{-1}_\loc(\R^d)$ such that
\begin{equation}\label{inv1}\tag{INV}\sum_i\frac{\mu(\Ai)}{|\Ai|}1_{\Ai}\leq\nu^\ep~(\ep\ll1).
\end{equation}
\end{ass}
We use following estimate later.
\begin{lem}\label{inv2}
Let $\mu\in W^{-1,\infty}_\loc(\R^d)^+.$ and assume \Cref{tiles}. Then, there exists $c'>0$ such that
\[
\mu(\Ai)\leq c'\ep^{d-1}\mbox{ for each }\ep,~i\mbox{ satisfying }\Omega\cap \Ai\neq\emptyset.
\]
\begin{proof}
Take $g\in C^1(\R,[0,1])$ such that $g(x)=\begin{cases}
1&(|x|\leq 1)\\
0&(|x|>2)
\end{cases}$ and a bounded open set $V\supset\cls{\bigcup_{x\in\Omega}B(x,2\diam A)}.$ Let $X=W_0^{1,1}(V)$ with the norm  $\norm{\cdot}_X=\norm{\grad\cdot}_{L^1(V)}.$
Take some $y\in\Ai.$
We have
\begin{align*}\mu(\Ai)&\leq\int g\qty(\frac{|x-y|}{\diam\Ai})d\mu(x)
\leq \norm{\mu}_{X^*}\int \abs{\grad_xg\qty(\frac{|x-y|}{\diam\Ai})}dx\\
&\leq \norm{\mu}_{X^*}\norm{g'}_\infty|B(0,2)|(\ep\diam A)^{d-1}\end{align*}
for $\ep,~i$ satisfying $\Omega\cap \Ai\neq\emptyset.$
\end{proof}
\end{lem}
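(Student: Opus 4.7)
The strategy is to exploit the duality $W^{-1,\infty}(V)=(W^{1,1}_0(V))^*$ for a bounded open set $V\subset\R^d$: since $\mu\in W^{-1,\infty}_\loc(\R^d)^+$ is a non-negative linear functional on $W^{1,1}_0(V)$, I plan to bound $\mu(\Ai)$ by pairing $\mu$ against a non-negative bump function that majorizes $1_{\Ai}$. First, fix a bounded open set $V\supset\cls{\bigcup_{x\in\Omega}B(x,2\diam A)}$; then for $\ep\ll 1$ and any $i$ with $\Omega\cap\Ai\neq\emptyset$, both $\Ai$ and its $2\ep\diam A$-enlargement lie inside $V$. Set $C:=\norm{\mu}_{W^{-1,\infty}(V)}<\infty$; since $\mu\geq 0$, this gives
\[\int\varphi\,d\mu\leq C\norm{\grad\varphi}_{L^1(V)}\quad\text{for every }\varphi\in W^{1,1}_0(V)\text{ with }\varphi\geq 0.\]

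The key scaling step is to pick a single profile $g\in C^1(\R,[0,1])$ with $g=1$ on $[0,1]$ and $g=0$ on $[2,\infty)$, choose any $y\in\Ai$, and set $\varphi_i^\ep(x):=g\qty(\frac{|x-y|}{\ep\diam A})$. Since $\diam\Ai=\ep\diam A$, one has $\varphi_i^\ep\geq 1_{\Ai}$; since $\supp\varphi_i^\ep\subset\cls{B(y,2\ep\diam A)}\subset V$, the function lies in $W^{1,1}_0(V)$; and $\abs{\grad\varphi_i^\ep}\leq\norm{g'}_\infty/(\ep\diam A)$ supported on a ball of volume $|B(0,2)|(\ep\diam A)^d$, giving
\[\norm{\grad\varphi_i^\ep}_{L^1(V)}\leq\norm{g'}_\infty|B(0,2)|(\ep\diam A)^{d-1}.\]

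Chaining these estimates,
\[\mu(\Ai)\leq\int\varphi_i^\ep\,d\mu\leq C\norm{\grad\varphi_i^\ep}_{L^1(V)}\leq c'\ep^{d-1}\]
with $c'=C\norm{g'}_\infty|B(0,2)|(\diam A)^{d-1}$ independent of $\ep$ and $i$. There is no real technical obstacle; the only substantive observation is that the natural test space for $W^{-1,\infty}$ is $W^{1,1}_0$ rather than $H^1_0$, so the gradient $L^1$-norm of a rescaled bump scales like its surface area $r^{d-1}$, which precisely matches the $\ep^{d-1}$ rate asserted.
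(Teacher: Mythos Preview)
Your proof is correct and follows essentially the same route as the paper's: choose the same neighborhood $V$, use the duality $W^{-1,\infty}(V)=(W^{1,1}_0(V))^*$, and test $\mu$ against the rescaled radial bump $x\mapsto g\bigl(|x-y|/\diam\Ai\bigr)$ to pick up the $\ep^{d-1}$ scaling from $\norm{\grad\cdot}_{L^1}$. The only cosmetic difference is that you write $\ep\diam A$ where the paper writes $\diam\Ai$ (these coincide since $\Ai=\ep(A+i)$) and you spell out a few containments more explicitly.
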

We expect $\{\Ki\}$ satisfying \eqref{invcapholes} below is a solution to the inverse homogenization problem by \Cref{invhint}.
\begin{ass}\label{invconstholes}
$\Ki=\emptyset$ or \Cref{assholec1} for each $\ep,i.$ Let $\cls{B(x_i^\ep,\ai)}$ be the minimal ball such that $\Ki\subset\cls{B(x_i^\ep,\ai)}.$
Assume there exist $c_1,~c_2>0$ such that $B(x_i^\ep,c_1\ep)\subset\Ai$ (see \Cref{fignonperi}) and \begin{equation}\label{invcapholes}
c_2\ai^{d-2}= \capa(\Ki)=\mu(\Ai)\end{equation}
for each $\ep,~i$.
\begin{figure}\centering
\begin{overpic}[width=8cm]{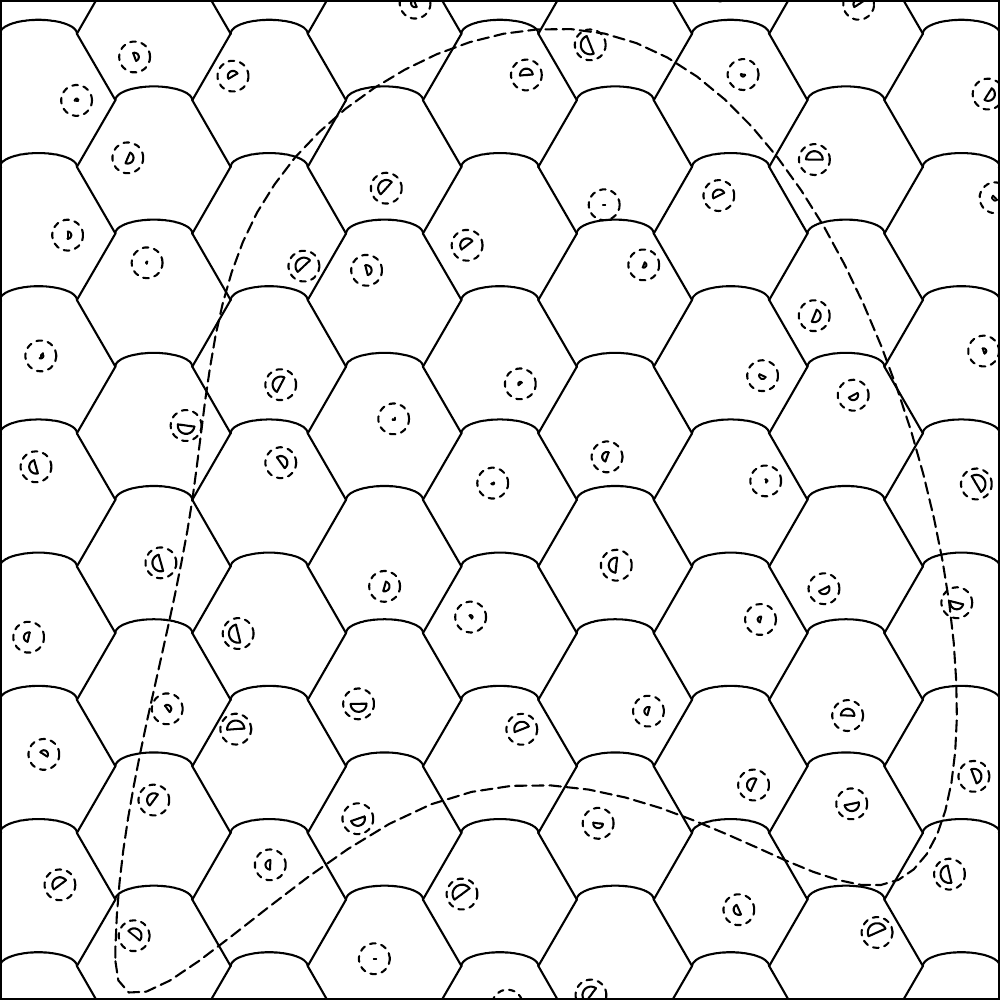}
\put(45,22){$\Omega$}
\put(59,30){$\Ai$}
\end{overpic}
\caption{A domain perforated by holes\label{fignonperi}}
\end{figure}
\begin{rem}
\eqref{invcapholes} is satisfied if $K$ is compact, $\capa(K)>0,~\Ki$ is congruence with $(\frac{\mu(\Ai)}{\capa(K)})^\frac{1}{d-2}K.$
\end{rem}
\end{ass}
Now we show \Cref{inv} which is a generalized result for \Cref{invperi}.
\begin{theo}\label{inv}
Under \Cref{w-1infloc,tiles,invconstholes},
the solutions $u^\ep$ and $u$ to \eqref{poipot} and \eqref{poi} satisfy $u^\ep\to u$ weakly in $H_0^1(\Omega).$
\end{theo}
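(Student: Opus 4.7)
The plan is to verify \Cref{asss} (via the simpler criterion \Cref{assalt}) for the choice $\Ri=c_1\ep$ and then invoke \Cref{resu}. With this choice, $\Bi=B(x_i^\ep,c_1\ep)\subset\Ai$ by \Cref{invconstholes}, so the disjointness of $\{\Ai\}$ from \Cref{tiles} transfers to $\{\Bi\}$. Condition \eqref{A0} is immediate, and \Cref{assholec1} is inherited from \Cref{invconstholes} (empty holes contribute nothing throughout). Only \eqref{assalt1}, \eqref{assalt2}, \eqref{assalt3} and \eqref{ldc} remain.

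The key scaling follows from combining \eqref{invcapholes} with \Cref{inv2}:
\[
\ai=(\mu(\Ai)/c_2)^{1/(d-2)}\leq (c'/c_2)^{1/(d-2)}\ep^{(d-1)/(d-2)}.
\]
Since $(d-1)/(d-2)>1$, this gives $\sup_i\ai/\Ri\to 0$, i.e.\ \eqref{assalt1}. Condition \eqref{assalt2} is trivial since $\diam\Ai=\ep\diam A$ while $\Ri=c_1\ep$, so the ratio is the constant $\diam A/c_1$. The identity $\capa(\Ki)=\mu(\Ai)$ from \eqref{invcapholes} together with \Cref{invhint} immediately gives \eqref{ldc}.

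For \eqref{assalt3}, I would rewrite $\ai^{d-2}/|\Ai|=\mu(\Ai)/(c_2|\Ai|)$ so the sequence in question is $c_2^{-1}\sum_i(\mu(\Ai)/|\Ai|)1_{\Ai}$. By \Cref{invhint} it converges to $c_2^{-1}\mu$ in $\D'(\R^d)$; by \Cref{w-1infloc} and the embedding \eqref{embedding} the limit lies in $H^{-1}_\loc(\R^d)$; and by the hypothesis \eqref{inv1} the sequence is dominated by $c_2^{-1}\nu^\ep$, which converges in $H^{-1}_\loc(\R^d)$. For any ball $V\subset\R^d$, applying \Cref{conv2} on $V$ with $\nu_+^\ep$ the sequence, $\nu_-^\ep=0$, and $N^\ep=c_2^{-1}\nu^\ep|_V$ upgrades the distributional convergence to norm convergence in $H^{-1}(V)$, which is \eqref{assalt3}.

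The main delicate point is \eqref{assalt3}: this is exactly where the dominating sequence built into \Cref{w-1infloc} plays its role, and its somewhat unusual formulation is dictated by the requirement to feed \Cref{conv2}. All other items of \Cref{asss} reduce to the uniform bound $\mu(\Ai)=O(\ep^{d-1})$ supplied by \Cref{inv2}, and once \Cref{asss} is in hand the conclusion of \Cref{inv} is just \Cref{resu}.
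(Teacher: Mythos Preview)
Your proposal is correct and follows essentially the same route as the paper's proof: choose $\Ri=c_1\ep$, verify the simplified criterion of \Cref{assalt}, and invoke \Cref{resu}. The paper's own argument is more terse but uses precisely the same ingredients---\Cref{inv2} plus \eqref{invcapholes} for \eqref{assalt1}, the scaling $\diam\Ai=\ep\diam A$ for \eqref{assalt2}, and \Cref{invhint} together with \eqref{inv1} and \Cref{conv2} for \eqref{ldc} and \eqref{assalt3}---so there is no substantive difference in method.
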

\begin{proof}
We verify assumptions in \Cref{assalt} for $\Ri=c_1\ep.$ \eqref{A0} is clear.
\Cref{invconstholes} and \Cref{inv2} imply \eqref{supertile} and
\[c_2(\ai/\Ri)^{d-2}\leq c'c_1^{-d+2}\ep.\]
Therefore, \eqref{assalt1} follows. \eqref{assalt2} follows from \eqref{invcapholes}, $\diam\Ai=\ep\diam A.$
\eqref{ldc} and \eqref{assalt3} follow from \Cref{invhint}, \eqref{inv1} and \Cref{conv2}.
\end{proof}
\subsection{Potentials that the inverse homogenization problem is solved}\label{potexamp}
We first state some properties of potentials satisfying \Cref{w-1infloc}.
\begin{lem}\label{invsub}
Let $\nu$ be a positive Borel measure on $\R^d$, $\mu$ satisfy \Cref{w-1infloc}, $\nu\leq\mu.$ Then, $\nu$ also satisfies \Cref{w-1infloc} and \eqref{inv1}.
\begin{proof}
See \cite[Lemma 2.8.]{CM} to show $\nu\in W^{-1,\infty}_\loc(\R^d).$ \eqref{inv1} is clear.
\end{proof}
\end{lem}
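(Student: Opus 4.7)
The plan is to leverage the pointwise inequality $\nu\leq\mu$ of Borel measures to transfer each piece of \Cref{w-1infloc} from $\mu$ to $\nu$. The bound \eqref{inv1} is immediate: integrating $1_{\Ai}$ against $\nu\leq\mu$ gives $\nu(\Ai)\leq\mu(\Ai)$ for every $\ep$ and $i$, hence
\[
\sum_i\frac{\nu(\Ai)}{|\Ai|}1_{\Ai}\leq\sum_i\frac{\mu(\Ai)}{|\Ai|}1_{\Ai}\leq\nu^\ep,
\]
so the same converging sequence $\{\nu^\ep\}\subset H^{-1}_\loc(\R^d)^+$ that witnesses \eqref{inv1} for $\mu$ also works for $\nu$.

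The substantive step is to verify $\nu\in W^{-1,\infty}_\loc(\R^d)$. I would fix an arbitrary ball $V\subset\R^d$ and use the duality $W^{-1,\infty}(V)=(W_0^{1,1}(V))^*$ built into the embedding chain \eqref{embedding}. For $\phi\in W_0^{1,1}(V)$, decompose $\phi=\phi_+-\phi_-$ with $\phi_\pm=\max(\pm\phi,0)$; both lie in $W_0^{1,1}(V)$ with $\norm{\grad\phi_\pm}_{L^1(V)}\leq\norm{\grad\phi}_{L^1(V)}$. Since $\mu,\nu\geq0$ as measures and $\nu\leq\mu$,
\[
\int\phi_\pm\,d\nu\leq\int\phi_\pm\,d\mu=\ev{\mu,\phi_\pm}\leq\norm{\mu}_{W^{-1,\infty}(V)}\norm{\grad\phi_\pm}_{L^1(V)},
\]
and combining the two signs gives $\abs{\ev{\nu,\phi}}\leq 2\norm{\mu}_{W^{-1,\infty}(V)}\norm{\grad\phi}_{L^1(V)}$, i.e.\ $\nu\in W^{-1,\infty}(V)$. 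Since $V$ was arbitrary, $\nu\in W^{-1,\infty}_\loc(\R^d)^+$. This is the content of the sign-decomposition argument in \cite[Lemma 2.8]{CM}.

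The only delicate point is making sure $\phi_\pm\in W_0^{1,1}(V)$ and that $\phi_\pm$ is $\nu$-integrable so the pairings above are well defined. The first is a standard truncation fact for Sobolev functions preserving zero boundary values; the second follows from the fact that $\phi_\pm\geq0$ is an admissible test against $\mu\in W^{-1,\infty}(V)^+$, yielding $\mu$-integrability, after which $\nu\leq\mu$ supplies $\nu$-integrability. I do not expect any substantive obstacle here, and indeed the author's curt reference to \cite[Lemma 2.8]{CM} indicates this is the entire proof.
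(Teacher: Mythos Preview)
Your proposal is correct and matches the paper's approach exactly: the paper's proof consists of the single remark that \eqref{inv1} is clear (your monotonicity argument) together with a citation to \cite[Lemma 2.8]{CM} for $\nu\in W^{-1,\infty}_\loc(\R^d)$, and you have simply unpacked that citation via the sign-decomposition argument. There is nothing to add or correct.
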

\begin{lem}\label{invsum}
Let $\mu_1$ and $\mu_2$ satisfy \Cref{w-1infloc}. Then $\mu_1+\mu_2$ also satisfies it.
\end{lem}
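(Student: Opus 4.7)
The plan is to verify the two clauses of \Cref{w-1infloc} for $\mu_1+\mu_2$ separately, using the fact that $W^{-1,\infty}_{\loc}(\R^d)^+$ and $H^{-1}_{\loc}(\R^d)^+$ are both closed under addition, and that the inequality \eqref{inv1} is additive in the potential.

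First I would observe that $\mu_1+\mu_2$ is a non-negative Borel measure, and that $W^{-1,\infty}_{\loc}(\R^d)$ is a vector space, so $\mu_1+\mu_2\in W^{-1,\infty}_{\loc}(\R^d)^+$ is immediate from $\mu_1,\mu_2\in W^{-1,\infty}_{\loc}(\R^d)^+$.

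For the second clause, let $\{\nu_1^\ep\},\{\nu_2^\ep\}\subset H^{-1}_{\loc}(\R^d)^+$ be the sequences guaranteed by \Cref{w-1infloc} for $\mu_1,\mu_2$ respectively, each converging in $H^{-1}_{\loc}(\R^d)$ and satisfying \eqref{inv1}. Set $\nu^\ep=\nu_1^\ep+\nu_2^\ep$. This is a sum of non-negative Borel measures, hence belongs to $H^{-1}_{\loc}(\R^d)^+$, and it converges in $H^{-1}_{\loc}(\R^d)$ since the two summands do. By additivity of the measure and linearity of the construction,
\[
\sum_i\frac{(\mu_1+\mu_2)(\Ai)}{|\Ai|}1_{\Ai}
=\sum_i\frac{\mu_1(\Ai)}{|\Ai|}1_{\Ai}+\sum_i\frac{\mu_2(\Ai)}{|\Ai|}1_{\Ai}
\leq\nu_1^\ep+\nu_2^\ep=\nu^\ep
\]
for $\ep\ll1$, which is exactly \eqref{inv1} for $\mu_1+\mu_2$.

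There is no genuine obstacle here; the only thing to keep in mind is that the decomposition $(\mu_1+\mu_2)(\Ai)=\mu_1(\Ai)+\mu_2(\Ai)$ is automatic for Borel measures, so the majorizing sequences simply add. This argument is completely parallel in spirit to \Cref{invsub}, which handled the sub-measure case.
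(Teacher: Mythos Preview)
Your proof is correct. The paper actually states this lemma without proof, treating it as self-evident; your argument supplies exactly the routine verification one would expect, namely that both clauses of \Cref{w-1infloc} are preserved under finite sums.
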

\begin{ex}\label{invboudense}
Let $\mu$ satisfy \Cref{w-1infloc}, $f\in L^\infty(\mu,[0,\infty))$ and $d\mu'(x)=f(x)d\mu(x),$ Then $\mu'$ also satisfies them. 
\end{ex}
Now we show $\mu\in L^d$ satisfies \Cref{w-1infloc}.
\begin{lem}\label{dencapLp}
Let $1\leq p<\infty,~\mu\in L^p_\loc(\R^d,[0,\infty)).$ Then, we have
\[\sum_i\frac{\mu(\Ai)}{|\Ai|}1_{\Ai}\to\mu\mbox{ in }L^p_\loc(\R^d).\]
\begin{proof}
Let $V$ be an open ball and $V_1=\bigcup_{x\in V}B(x,\diam A).$
Take $\{\mu_\delta\}_{\delta>0}\subset C^\infty(V_1)$ such that $\lim_{\delta\to0}\norm{\mu_\delta-\mu}_{L^p(V_1)}=0.$
We have \[\abs{\frac{\mu_\delta(\Ai)}{|\Ai|}-\mu_\delta(x)}
\leq\frac{\int_{\Ai}|\mu_\delta(y)-\mu_\delta(x)|dy}{|\Ai|}
\leq\ep\Lip{\mu_\delta}\diam A~(x\in\Ai).\]
H\"{o}lder's inequality gives \[\abs{\frac{\mu(\Ai)-\mu_\delta(\Ai)}{|\Ai|}}
\leq|\Ai|^{-1/p}\norm{\mu-\mu_\delta}_{L^p(\Ai)}\eqqcolon b_i^\ep\mbox{ for each }\ep>0,~i\in L.\]
Then, we have $\norm{\sum_i\frac{\mu(\Ai)-\mu_\delta(\Ai)}{|\Ai|}1_{\Ai}}_{L^p(V)}^p
\leq\sum_i\norm{b_i^\ep}_{L^p(\Ai)}^p
\leq\norm{\mu-\mu_\delta}_{L^p(V_1)}^p.$
Using these inequalities, we have
\[\norm{\sum_i\frac{\mu(\Ai)}{|\Ai|}1_{\Ai}-\mu}_{L^p(V)}
\leq2\norm{\mu-\mu_\delta}_{L^p(V_1)}+\ep\Lip{\mu_\delta}(\diam A)|V_1|^{1/p}\]
for each $\ep,~\delta>0,$ we have $\limsup_{\ep\to 0}\norm{\sum_i\frac{\mu(\Ai)}{|\Ai|}1_{\Ai}-\mu}_{L^p(V)}=0.$
\end{proof}
\end{lem}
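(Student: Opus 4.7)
The plan is to combine density of smooth functions in $L^p_\loc$ with a standard triangle-inequality split, treating the smooth piece by a pointwise Lipschitz estimate and the residual by Jensen's inequality. First I fix an open ball $V\subset\R^d$ and set $V_1=\bigcup_{x\in V}B(x,\diam A)$; since $\diam\Ai=\ep\diam A$, for all $\ep\leq 1$ every tile $\Ai$ intersecting $V$ is contained in $V_1$, so the restriction of $\sum_i\tfrac{\mu(\Ai)}{|\Ai|}1_{\Ai}$ to $V$ only sees $\mu|_{V_1}$. Given $\delta>0$, I pick $\mu_\delta\in C^\infty(V_1,[0,\infty))$ with $\norm{\mu-\mu_\delta}_{L^p(V_1)}$ as small as I wish.

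The core step is the decomposition
\[
\norm{\sum_i\tfrac{\mu(\Ai)}{|\Ai|}1_{\Ai}-\mu}_{L^p(V)}
\leq E_1(\ep,\delta)+E_2(\ep,\delta)+E_3(\delta),
\]
where $E_1$ denotes the $L^p(V)$-norm of $\sum_i\tfrac{(\mu-\mu_\delta)(\Ai)}{|\Ai|}1_{\Ai}$, $E_2$ that of $\sum_i\tfrac{\mu_\delta(\Ai)}{|\Ai|}1_{\Ai}-\mu_\delta$, and $E_3=\norm{\mu_\delta-\mu}_{L^p(V)}$. For $E_1$, Jensen's inequality applied cell-by-cell yields $|\Ai|^{-p}|(\mu-\mu_\delta)(\Ai)|^p\leq |\Ai|^{-1}\int_{\Ai}|\mu-\mu_\delta|^p$; integrating over $\Ai$, summing over the disjoint cells and using that their union in $V$ lies in $V_1$ bounds $E_1$ by $\norm{\mu-\mu_\delta}_{L^p(V_1)}$, and clearly $E_3\leq\norm{\mu-\mu_\delta}_{L^p(V_1)}$ as well. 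For $E_2$, smoothness of $\mu_\delta$ gives $|\mu_\delta(x)-\tfrac{\mu_\delta(\Ai)}{|\Ai|}|\leq \Lip{\mu_\delta}\diam\Ai=\ep\Lip{\mu_\delta}\diam A$ for $x\in\Ai$, hence $E_2\leq \ep\Lip{\mu_\delta}(\diam A)|V_1|^{1/p}$.

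Since $\Lip{\mu_\delta}$ will generally blow up as $\delta\to 0$, the limits must be taken in the correct order: first send $\ep\to 0$ with $\delta$ fixed to kill $E_2$, obtaining $\limsup_{\ep\to 0}\norm{\sum_i\tfrac{\mu(\Ai)}{|\Ai|}1_{\Ai}-\mu}_{L^p(V)}\leq 2\norm{\mu-\mu_\delta}_{L^p(V_1)}$, then send $\delta\to 0$. There is no real obstacle here; the scheme is the standard one showing that cell averages of an $L^p$ function reconstruct the function in $L^p$. The only delicate point is the correct order of the two limits, and the bookkeeping to verify that the disjoint tiles intersecting $V$ all lie in $V_1$ for $\ep$ small, both of which are routine.
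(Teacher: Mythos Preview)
Your argument is correct and follows the same route as the paper: fix a ball $V$, enlarge to $V_1$, approximate $\mu$ by a smooth $\mu_\delta$, split via the triangle inequality, bound the smooth piece by a Lipschitz/diameter estimate and the residual by the $L^p$-contractivity of cell averaging, then let $\ep\to0$ before $\delta\to0$. The only cosmetic difference is that you invoke Jensen's inequality where the paper cites H\"older, which in this setting gives exactly the same bound.
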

\begin{ex}\label{invLd}
$\mu\in L^d_\loc(\R^d,[0,\infty))$ satisfies \Cref{w-1infloc}.
\begin{proof}
It follows from \eqref{embedding} and \Cref{dencapLp}.
\end{proof}
\end{ex}
Now we show surface measures satisfy \Cref{w-1infloc}. 
\begin{ex}\label{invgraph}
Let $S$ be a surface defined by the graph of $s\in C^1(\R^{d-1}).$ Then, $\mu=\delta_{S}$ satisfies \Cref{w-1infloc}.
\end{ex}
\begin{rem}
\Cref{invgraph} with $s=0$ and $A=(-1,1]^d$ corresponds to \cite[Example 2.9.]{CM}.
\end{rem}
\begin{proof}[Proof of \Cref{invgraph}.]
$\mu\in W^{-1,\infty}_\loc(\R^d)$ follows from trace theorem. Let
\[E_\ep={\{(x',x_d)\in\R^d\mid |x_d-s(x')|\leq\ep\}},~C=(1+\Lip{s})\diam A.\]
We have $\bigcup_{i,~\mu(\Ai)>0}\Ai\subset\bigcup_
{x\in S}B(x,\ep\diam A)\subset E_{C\ep}$ (see \Cref{surfest}).
\begin{figure}\centering
\begin{overpic}[width=8cm]{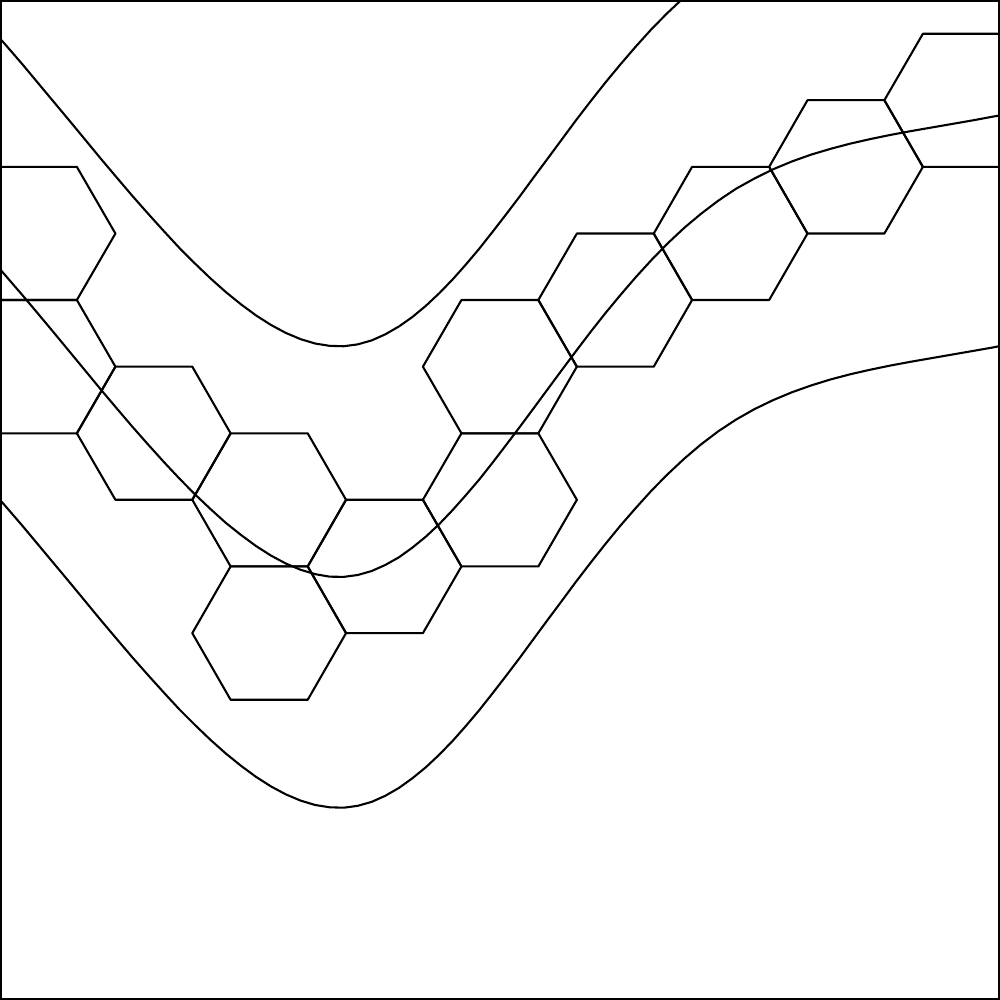}
\put(82,82){$\Ai$}
\put(40,53){$S\colon x_d=s(x')$}
\put(35,80){$x_d=s(x')+C\ep$}
\put(35,30){$x_d=s(x')-C\ep$}
\end{overpic}
\caption{Estimation for $\bigcup_{i~\mu(\Ai)>0}$\label{surfest} in \Cref{invgraph}}
\end{figure}
It and \Cref{inv2} imply
$\sum_i\frac{\mu(\Ai)}{|\Ai|}1_{\Ai}
\leq\frac{c'}{\ep|A|}1_{E_{C\ep}}.$
Let
\[\nu=\frac{1_{(0,\infty)}-1_{(-\infty,0)}}{2},~\nu_\ep(x)=\frac{1}{2\ep}\begin{cases}
-\ep&(x<-\ep)\\
x&(|x|\leq\ep)\\
\ep&(x>\ep)
\end{cases},\]
 $N(x',x_d)=\nu(x_d-s(x'))$ and $N_\ep(x',x_d)=\nu_\ep(x_d-s(x'))$ for each $\ep>0.$
We have $\pd_dN\in H^{-1}(\Omega).$ We can show $\pd_d N_\ep=(2\ep)^{-1}1_{E_\ep}$ in $\D'(\R^d)$ using $\pd\nu_\ep=(2\ep)^{-1}1_{[-\ep,\ep]}.$ For any $g\in\D(\Omega)$ with $\norm{g}_{H_0^1(\Omega)}\leq 1,$ we have
\[\abs{\ev{\frac{1_{E_{C\ep}}}{2C\ep}-\pd_d N,g}}
=\abs{\ev{N_{C\ep}-N,-\pd_dg}}
\leq c\norm{\nu_{C\ep}-\nu}_{L^2(\R)}\to 0\] for some $c>0.$
Therefore, we have \eqref{inv1}.
\end{proof}

\begin{ex}\label{invsurf}
Let $S$ be a piecewise $C^1$ surface on $\R^d.$ Then $\mu=\delta_S$ satisfies \Cref{w-1infloc}.
\begin{proof}
Sub-surface of $S$ in \Cref{invgraph} satisfies \Cref{w-1infloc} by \Cref{invsub}.
It and the implicit function theorem give measures $\mu_j~(j=1,...,N)$ such that $\mu=\sum_{j\leq N}\mu_j$ and each $\mu_j$ satisfies \Cref{w-1infloc}.  
$\mu$ satisfies \Cref{w-1infloc} by \Cref{invsum}.
\end{proof}
\end{ex}
Finally, we show \Cref{invperi} which is one of our main result.
\begin{proof}[Proof of \Cref{invperi}.]
$A=(-1,1]^d$ and $\Lambda=2\ep\Z^d$ satisfy \Cref{tiles}. \Cref{w-1infloc} follows from \Cref{invLd,invsum,invsurf,invboudense}. \Cref{invconstholes} follows from $\capa(\Ki)=(d-2)S_d\ai^{d-2}=\mu(\Ai)$ where $\ai=(\frac{\mu(\Ai)}{(d-2)S_d})^\frac{1}{d-2}.$
\end{proof}
\section{Open problem}
The inverse for homogenization problem is not solved for every $\mu\in W^{-1,\infty}(\Omega)$ in this paper. It is expired to relax \eqref{inv1} if possible.

This paper requires uniform convergence for $\diam\Ki$ as \eqref{uniformdiam}. We want to study about the case for which the volumes of holes tend to $0$ (the diameters may not tend to $0$).
\nocite{CICE}
\bibliographystyle{plain}
\bibliography{Cite/CICE,Cite/RT,Cite/CM,Cite/Con,Cite/KM,Cite/NSL,Cite/Hru2,Cite/RATE}

\begin{thebibliography}{1}

\bibitem{CM}
D.~Cioranescu and F.~Murat.
\newblock A strange term coming from nowhere.
\newblock In {\em Topics in the Mathematical Modelling of Composite Materials. Progress in Nonlinear Differential Equations and Their Applications}, pages 45--93. Birkh{\"a}user Boston, 1997.

\bibitem{NSL}
A.P. Doohovskoy and N.S. Landkof.
\newblock {\em Foundations of Modern Potential Theory}.
\newblock Grundlehren der mathematischen Wissenschaften. Springer Berlin Heidelberg, 1972.

\bibitem{Hru2}
E.~J. Hruslov.
\newblock The method of orthogonal projections and the dirichlet problem in domains with a fine-grained boundary.
\newblock {\em Mathematics of the USSR-Sbornik}, 17(1):37, feb 1972.

\bibitem{RATE}
H.~Ishida.
\newblock Convergence rate of {Dirichlet Laplacians} on domains with holes to the {Schr\"{o}dinger} operator with {$L^p$} potential.
\newblock preprint on arXiv:2312.08620.

\bibitem{Con}
H.~Ishida.
\newblock {Homogenization for Poisson equations in domains with concentrated holes}.
\newblock {\em SUT J. Math.}, 59(2):61--71, 2023.

\bibitem{CICE}
A.~Khrabustovskyi and O.~Post.
\newblock Operator estimates for the crushed ice problem.
\newblock {\em Asymptot. Anal.}, 110(3-4):137--161, nov 2018.

\bibitem{KM}
V.A. Marchenko and E.Ya. Khruslov.
\newblock {\em Homogenization of Partial Differential Equations}.
\newblock Birkhäuser Boston, MA, 01 2006.

\bibitem{RT}
J.~Rauch and M.~Taylor.
\newblock Potential and scattering theory on wildly perturbed domains.
\newblock {\em J. Funct. Anal.}, 18(1):27--59, 1975.

\end{thebibliography}
\end{document}